\newtheorem{theorem}{Theorem}[section]
\newtheorem{lemma}[theorem]{Lemma}
\newtheorem{remark}[theorem]{Remark}
\newtheorem{definition}[theorem]{Definition}
\newtheorem{hypothesis}[theorem]{Hypothesis}
\DeclareSymbolFont{symbolsC}{U}{txsyc}{m}{n}
\DeclareMathSymbol{\notniFromTxfonts}{\mathrel}{symbolsC}{61}
\begin{document}
	
	\baselineskip 16pt

	\title{The conjugacy diameters of non-abelian finite $p$-groups with cyclic maximal subgroups}

	\author{Fawaz Aseeri\footnote{Corresponding author.}\\
		{\small Mathematics Department, Faculty of Sciences, Umm Al-Qura University,}\\ {\small Makkah 21955, Saudi Arabia}\\
		{\small E-mail:
			fiaseeri@uqu.edu.sa}\\ \\
		{Julian Kaspczyk\footnote{At the date of submission, this author was not anymore affiliated with the Technische Universit\"{a}t Dresden. However, the bulk of the work presented here was done when he still was a postdoc at this university.}}\\
		{\small Institut f\"{u}r Algebra, Fakult\"{a}t Mathematik, Technische Universit\"{a}t Dresden,}\\
		{\small 01069 Dresden, Germany}\\
		{\small E-mail: julian.kaspczyk@gmail.com}}
	
	\date{}
	\maketitle

	\begin{center}
	\textit{We dedicate this paper to our former supervisor Professor Benjamin Martin. \newline You have our deepest thanks.}
	\end{center}
	\begin{abstract}

Let $G$ be a group. A subset $S$ of $G$ is said to normally  generate $G$ if $G$ is the normal closure of $S$ in $G.$ In this case, any element of $G$ can be written as a product of conjugates of elements of $S$ and their inverses. If $g\in G$ and $S$ is a normally generating subset of $G,$ then we write $\| g\|_{S}$ for the length of a shortest word in $\mbox{Conj}_{G}(S^{\pm 1}):=\{h^{-1}sh | h\in G, s\in S \, \mbox{or} \, s{^{-1}}\in S  \}$ needed to express $g.$ For any normally generating subset $S$ of $G,$ we write $\|G\|_{S} =\mbox{sup}\{\|g\|_{S} \,|\,\, g\in G\}.$ Moreover, we write $\Delta(G)$ for the supremum of all $\|G\|_{S},$ where $S$ is a finite normally generating subset of $G,$ and we call $\Delta(G)$ the conjugacy diameter of $G.$ In this paper, we determine the conjugacy diameters of the semidihedral $2$-groups, the generalized quaternion groups and the modular $p$-groups. This is a natural step after the determination of the conjugacy diameters of dihedral groups, which were recently found by the first author (finite case) and by  K{\c e}dra,\,{Libman} and {Martin} (infinite case).
		
	\end{abstract}		
	
	\footnotetext{Keywords: semidihedral group, quaternion group, modular $p$-groups, normally generating subsets, word norm, conjugacy diameter.}

	\footnotetext{Mathematics Subject Classification (2020): 05E16, 20D15.}
	\let\thefootnote\thefootnoteorig

	\section{Introduction}

	\subsection{Background on norms and boundedness} 
	Let $G$ be a group. A \textit{norm}  on $G$ is a function $\nu:G\longrightarrow [0,\infty)$ which satisfies the following axioms: 
 \hfill\\
\,\,\,\textcolor{white}{.......}(i) $\nu(g)=0 \iff g=1;$\\
\textcolor{white}{......}(ii) $\nu(g^{-1})=\nu(g)$ for all $g\in G;$ \\
\textcolor{white}{.....}(iii) $\nu(gh)\le \nu(g)+\nu(h)$ for all $g,h \in G.$\\
We call $\nu$ \textit{conjugation-invariant} if we also have\\
\textcolor{white}{.....}(iv) $\nu(g^{-1}hg)=\nu(h)\,\, \mbox{for all}\,\, g,h\in G.$\\

 The \textit{diameter} of a group $G$ with respect to a conjugation-invariant norm $\nu$ on $G$ is defined as  \[\mbox{diam}_\nu(G):=\sup\{\nu{(g)}| g\in G\}.\]  
A group $G$ is said to be \textit{bounded} if every conjugation-invariant norm on $G$ has a finite diameter.
 The concepts of conjugation-invariant norm and boundedness were introduced by Burago, Ivanov and Polterovich in \cite{Burgo} and they provided a number of applications to geometric group theory, Hamiltonian dynamics and finite groups. Since then, there has been a large interest in providing further applications, examples and analogues of these notions in geometry, group theory and topology, see \cite{vla3,Kedraa,fragg, MorTok,Ben, Mur} for example.

 An important source of conjugation-invariant norms are normally generating subsets of groups. Let $G$ be a group and $S \subseteq G.$ The \textit{normal closure of $S$} in $G,$ denoted by $\langle\langle S \rangle\rangle,$ is the subgroup of $G$ generated by all conjugates of elements of $S$. In other words, it is the smallest
normal subgroup of $G$ containing $S.$ We say that $S$ \textit{normally generates} $G$ if $G=\langle\langle S \rangle\rangle.$ In this case, any element of $G$ can be written as a product of elements of 	
\begin{equation}
\label{definitionofconjG}
\mbox{Conj}_{G}(S^{\pm 1}):
=\{h^{-1}sh | h\in G, s\in S \, \mbox{or} \, s{^{-1}}\in S  \}.
\end{equation}

	If $S$ normally generates $G,$ then the \textit{length}\,$\|g\|_{S}\in \mathbb{N}$ of $g\in G$ with respect to $S$ is defined to be the length of a shortest word in $\mbox{Conj}_{G}(S^{\pm 1})$ that is needed to express $g.$ In other words, 
\[\| g\|_{S}=\mbox{inf}\{n\in \mathbb{N} | g=s_{1}\cdots s_{n}  \,\,\mbox{for some}\,\,  s_{1},
\dots,s_{n}\in  \mathrm{Conj}_{G}(S^{\pm 1}) \}.\]

It is important to stress that $\| 1\|_S$, the length of the identity element $1$ of $G$ with respect to $S$, is $0$. \textit{The word norm}  $\parallel .\parallel_S\colon G\rightarrow [0,\infty),$   $g\mapsto \| g\|_S$ is a conjugation-invariant norm on $G.$ The \textit{diameter} of a group $G$ with respect to the word norm $\parallel .\parallel_S$ is    \[\|G\|_{S}:=\mbox{sup}\{\|g\|_{S}  | g\in G\}.\] 
For other examples of conjugation-invariant norms, see \cite{Kedraa,Burgo}.

If $G$ is a group and $S$ is a finite normally generating subset of $G,$ then $G$ is bounded if and only if $\|G\|_{S}$ is finite (see \cite[Corollary $2.5$]{Ben}). Therefore, word norms with respect to finite normally generating subsets and their diameters are an important tool to study boundedness of groups.

 Moreover, word norms are used in the study of several refinements of the concept of bounded groups. To mention these refinements, we introduce some notation.  For any group $G$ and any $n\geq 1,$ let 
 \[\Gamma_{n}(G):=\{S\subseteq G\,\,|\,\, |S|\leq n\,\, \mbox{and} \,\,S \,\,\mbox{normally generates} \,\,G\},\]
\[\Gamma(G):=\{S\subseteq G\,\,|\,\, |S|< \infty\,\, \mbox{and} \,\,S \,\,\mbox{normally generates} \,\,G\}.\]
Set
 \begin{gather*}
\Delta_{n}(G):=\sup\{{\|G\|_{S}}\,\,|S\in \Gamma_{n}(G) \},
\\
\Delta(G):=\sup\{ {\|G\|_{S}}\,\,| S\in \Gamma(G) \}.
\end{gather*}

For any group $G,$ $\Delta(G)$ is said to be \textit{conjugacy diameter of} $G.$ The group $G$ is called \textit{strongly bounded} if $\Delta_{n}(G)<\infty$ for all $n\in\mathbb{N}$ (see \cite[Definition $1.1$]{Ben}).  The group $G$ is called \textit{uniformly bounded} if $\Delta(G)<\infty$  (see \cite[Definition $1.1$]{Ben}).

\subsection{Motivation and statements of results}

As we have seen in the previous section, word norms and their diameters emerge in the study of boundedness. Since any finite group is known to be uniformly bounded, the study of boundedness mainly focuses on infinite groups. However, word norms and their diameters are also of interest in finite group theory. For example, conjugacy diameters of finite groups can be used to study conjugacy class sizes (see \cite[Proposition 7.1]{Ben}). Recently, conjugacy diameters were studied for several classes of finite groups. For example, {K{\c e}dra},\,{Libman} and {Martin} showed that $\Delta(PSL(n,q))\leq 12(n-1)$
for any $n\geq 3$ and any prime power $q$ (see \cite[Example 7.2]{Ben}). Also, {Libman} and {Tarry} proved that $\Delta(S_n)= n-1$ for any $n\geq 2$ (see \cite[Thereom 1.2]{Tarry}) and that if $G$ is a nonabelian group of order $pq$, where $p$ and $q$ are prime numbers with $p < q$, then $\Delta(G) = \mathrm{max} \lbrace \frac{p-1}{2},2 \rbrace$ (see \cite[Thereom 1.1]{Tarry}). The first author determined the conjugacy diameters of finite dihedral groups:

\begin{theorem} \cite[Theorem 6.0.2]{Fawaz} 
\label{FawazResult}
Let $n\geq 3$ be a natural number and $G:=D_{2n}=\langle a,b|a^{n}=1=b^{2},bab=a^{-1}\rangle$ be the dihedral group of order $2n.$ Then 
\label{diemaeterofdihedralgroup}
 \[\Delta(G)=
 \left\{
  \begin{array}{@{}ll@{}}
  2 \,\,\,\mathrm{if}\, \,n\geq 3\,\mathrm{and}\,\,$n$\,\,\mathrm{odd},\\
  
    2 \,\,\,\mathrm{if}\, \,n= 4,\\\
   3   \,\,\,\mathrm{if}\, \,n\geq 6\,\mathrm{and}\,\,$n$\,\,\mathrm{even}.\,\,\,      \\
             \end{array}
 \right. \] 
\end{theorem}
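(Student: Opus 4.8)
The plan is to work directly with the concrete structure of $G=D_{2n}$. Write $R:=\langle a\rangle$ for the cyclic rotation subgroup of order $n$; the remaining elements are the reflections $a^ib$ $(0\le i\le n-1)$, each of order $2$. I will use repeatedly that $ga^ig^{-1}=a^{-i}$ for every reflection $g$, that $a^j(a^ib)a^{-j}=a^{i+2j}b$, and hence that the conjugacy class of a reflection is the set of all reflections $a^ib$ with $i$ of one fixed parity — all $n$ reflections when $n$ is odd, one of two classes of size $n/2$ when $n$ is even. Two reductions set things up. First, since $[G,G]=\langle a^2\rangle$ (from $[a,b]=a^{-2}$) is cyclic, a standard argument shows that a finite set $S$ normally generates $G$ if and only if its image generates the abelianization $G/[G,G]$, which is $C_2$ for $n$ odd and $C_2\times C_2$ for $n$ even; hence every normally generating $S$ contains at least one reflection, and when $n$ is even it either meets both reflection classes or meets exactly one reflection class and also contains some $a^i$ with $i$ odd (extra elements of $[G,G]$ only enlarge $\mathrm{Conj}_G(S^{\pm1})$ and are harmless). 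Second, I record the product patterns for letters from $\mathrm{Conj}_G(S^{\pm1})$: a reflection times a reflection is a rotation whose exponent parity is the sum of the two exponent parities; a rotation times a reflection (and conversely) is a reflection; and over a full parity class of reflections such products already realize \emph{all} rotations of the appropriate exponent parity.

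From this the upper bounds follow quickly. If $n$ is odd, or if $n$ is even and $S$ meets both reflection classes, then every reflection is a letter, so each reflection has length $1$ and each rotation $a^m=(a^mb)\cdot b$ has length $\le 2$; thus $\|G\|_S\le 2$. In the remaining case — $n$ even, $S$ meeting exactly one reflection class (of parity $\epsilon$) and containing some $a^i$ with $i$ odd — the letters include all parity-$\epsilon$ reflections and $a^{\pm i}$; then every parity-$(1-\epsilon)$ reflection equals $a^i$ times a parity-$\epsilon$ reflection (length $\le 2$), every even-exponent rotation is a product of two parity-$\epsilon$ reflections (length $\le 2$), and an odd-exponent rotation $a^m=a^i\cdot a^{m-i}$ with $a^{m-i}$ an even rotation of length $\le 2$ has length $\le 3$. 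So $\|G\|_S\le 3$ in all cases; and when $n=4$ the only odd-exponent rotations are $a^{\pm1}$, which are themselves letters, so the same reasoning gives $\|G\|_S\le 2$.

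For the matching lower bounds: for $n$ odd take $S=\{b\}$, which normally generates (its conjugates are all reflections, and their pairwise products fill $R$); then $a$ is not a letter but $a=b\cdot(a^{-1}b)$, so $\|a\|_S=2$ and $\Delta(G)\ge2$. For $n=4$ take $S=\{a,b\}$: the letters are $\{a,a^{-1}\}$ and the parity-$0$ reflections, so the parity-$1$ reflection $ab$ is not a letter but equals $a\cdot b$, giving $\|ab\|_S=2$. For $n\ge6$ even take $S=\{a,b\}$ again; its letters are $\{a,a^{-1}\}$ and the parity-$0$ reflections, and I claim $\|a^3\|_S=3$: since $n\notin\{1,2,4\}$ the element $a^3$ is not a letter; checking the product types shows no product of two letters equals $a^3$ (two reflections give elements of $\langle a^2\rangle$, which omits $a^3$; mixed products are reflections; two elements of $\{a,a^{-1}\}$ give only $1,a^{\pm2}$); and $a^3=a\cdot a\cdot a$. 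Hence $\Delta(G)\ge3$, and together with the upper bounds the theorem follows.

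I expect the even case to be the main obstacle: one must correctly enumerate the shapes of a normally generating set modulo $[G,G]$, and — most delicately — show that in the one-reflection-class case an odd-exponent rotation genuinely needs three letters rather than two, which is exactly the point where $D_8$ diverges from $D_{2n}$ with $n\ge6$. The attendant bookkeeping with exponents modulo $n$ is routine but must be done carefully, both for the upper bound and for the non-representability claim underlying the lower bound.
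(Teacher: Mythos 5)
This theorem is quoted in the paper from \cite[Theorem 6.0.2]{Fawaz} and is not reproved here, so there is no in-paper proof to compare against; judged on its own terms, your argument is correct and complete, and it follows the same template the paper uses for the analogous results on $SD_n$ and $Q_n$ (classify the normally generating sets via the maximal normal subgroups, compute the conjugacy classes, then bound word lengths case by case). I verified the key computations: the conjugacy classes of reflections split by exponent parity exactly as you say, the upper bounds $\|G\|_S\le 2$ (all reflections among the letters) and $\|G\|_S\le 3$ (one reflection class plus an odd rotation) go through, the $n=4$ degeneration is handled correctly, and the lower-bound witnesses $S=\{b\}$ with $\|a\|_S=2$ and $S=\{a,b\}$ with $\|a^3\|_S=3$ for even $n\ge 6$ are sound. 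The one step you should not wave off as ``standard'' is the claim that $S$ normally generates $D_{2n}$ iff its image generates the abelianization: this is false for general groups (e.g.\ for perfect non-simple groups), and for $n$ odd $D_{2n}$ is not nilpotent, so the Frattini argument does not apply directly. It does hold here, but it needs the explicit observation that every proper normal subgroup of $D_{2n}$ is contained in an index-$2$ subgroup (any normal subgroup containing a reflection contains a full reflection class, hence $\langle a^2\rangle$, hence an index-$2$ subgroup), and all index-$2$ subgroups contain $[G,G]=\langle a^2\rangle$; this is exactly the role played by Lemma \ref{normalsubsinSDN} and Lemma \ref{normallygeneratinfsetinSDn} in the paper's treatment of the other families. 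With that sentence added, your proof is a correct, self-contained alternative to the cited thesis proof.
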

\noindent For the case of the infinite dihedral group $D_\infty= \langle a,b|b^{2}=1, b a b=a^{-1}\rangle$, we have 
$\Delta(D_\infty)\leq 4$ (see \cite[Example\,2.8]{Ben}). \,

The goal of this paper is to take Theorem \ref{FawazResult} further. For any natural number $n\geq 3,$ the dihedral group $D_{2n}$ is non-abelian and has a cyclic maximal subgroup. Thus, one way to extend Theorem \ref{FawazResult} would be to prove further results about conjugacy diameters of non-abelian finite groups with cyclic maximal subgroups. Instead of looking at arbitrary non-abelian finite groups with cyclic maximal subgroups, let us restrict our attention to non-abelian finite $p$-groups with cyclic maximal subgroups.

 \begin{definition}  Let $n$ be a natural number. 
    \begin{enumerate} [(i)]
    \item  For $n\geq 4,$ define  
\[SD_{n}:=\langle a,b|a^{2^{n-1}}=1=b^{2},a^{b}=a^{-1+2^{n-2}}\rangle.\]
We call $SD_{n}$ the \textit{semidihedral} group of order $2^{n}.$ \\

  \item
   For $n\geq 3,$ define 

\[Q_{n}:=\langle a,b|a^{2^{n-1}}=1, b^{2}=a^{2^{n-2}} ,a^{b}=a^{-1}\rangle.\]

We call $Q_{n}$ the \textit{(generalized) quaternion} group of order $2^{n}.$\\

  \item
  
  Let $p$ be a prime number and assume that $n\geq 4$ if $p=2$ and $n\geq 3$ if $p$ is odd. Define

\[M_{n}(p):=\langle a,b|a^{p^{n-1}}=1=b^{p},a^{b}=a^{1+p^{n-2}}\rangle.\]

We call $M_{n}(p)$ the \textit{modular} $p$-group of order $p^{n}.$
  
   \end{enumerate}
   
 \end{definition}

The non-abelian finite $p$-groups with cyclic maximal subgroups are fully classified by the following theorem.

\begin{theorem} (see \cite[Chapter 5, Theorem 4.4]{Daniel}) Let $P$ be a non-abelian $p$-group of order $p^{n}$ which contains a cyclic subgroup $H$ of order $p^{n-1}.$\,Then 
 \begin{enumerate} [(i)]
    \item If $n=3$ and $p=2,$ then $P$ is isomorphic to $D_8$ or $Q_3.$
  
          \item If $n>3$ and $p=2,$ then $P$ is isomorphic to $M_n(2), D_{2^{n}}, Q_{n},$ or $SD_{n}.$
\item If $p$ is odd, then $P$ is isomorphic to $M_n(p).$
                              
        \end{enumerate}
\end{theorem}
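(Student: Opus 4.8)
The plan is to recover $P$ up to isomorphism from a cyclic maximal subgroup together with a generator lying outside it. Since $p$-groups are nilpotent, the subgroup $H$ of index $p$ is normal; write $H=\langle a\rangle$ with $|a|=p^{n-1}$ and choose $b\in P\setminus H$, so that $P=\langle a,b\rangle$. Conjugation by $b$ is an automorphism of $H$, and since $H$ is abelian we have $H\leq C_P(H)$; comparing indices and using that $P$ is non-abelian forces $C_P(H)=H$, so this automorphism has order exactly $p$. Thus $a^b=a^r$ with $r^p\equiv1$ and $r\not\equiv1\pmod{p^{n-1}}$, and the argument now splits according to which order-$p$ element of $\mathrm{Aut}(C_{p^{n-1}})$ is realised.

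Next I would invoke the structure of $\mathrm{Aut}(C_{p^{n-1}})$. For $p$ odd it is cyclic of order $p^{n-2}(p-1)$, so (as $n\geq3$) it contains a unique subgroup of order $p$, generated by $x\mapsto x^{1+p^{n-2}}$; replacing $b$ by a suitable power $b^m$ with $p\nmid m$ converts an arbitrary generator $x\mapsto x^{1+kp^{n-2}}$ of this subgroup into $x\mapsto x^{1+p^{n-2}}$, using $(1+kp^{n-2})^m\equiv1+mkp^{n-2}\pmod{p^{n-1}}$. For $p=2$ and $n\geq4$, $\mathrm{Aut}(C_{2^{n-1}})\cong C_2\times C_{2^{n-3}}$ has exactly three involutions, giving $a^b\in\{a^{-1},\,a^{1+2^{n-2}},\,a^{-1+2^{n-2}}\}$; for $p=2$ and $n=3$ only $a^b=a^{-1}$ occurs. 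This fixes the conjugation action, and it remains to determine $b^p$.

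For each surviving value of $r$ I would then compute $C_H(b)$ (which contains $b^p$, as powers of $b$ commute with $b$) and expand $(a^kb)^p=a^{k(1+r+\dots+r^{p-1})}\,b^p$ in order to replace $b$ by a better element of its coset $Hb$. When $r\in\{1+p^{n-2},\,-1+2^{n-2}\}$, reducing the exponent $1+r+\dots+r^{p-1}$ modulo $p^{n-1}$ — which for odd $p$ hinges on $(1+p^{n-2})^i\equiv1+ip^{n-2}$ together with $\sum_{i=0}^{p-1}i=\binom{p}{2}$ being divisible by $p$ — lets me solve for $k$ with $(a^kb)^p=1$, producing the presentations $M_n(p)$ (this also covers $n=3$, $p$ odd) and $SD_n$. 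When $r=-1$ the same expansion gives $(a^kb)^2=b^2$ for every $k$, so $b^2$ is an invariant of the coset, lying in $C_H(b)=\{1,a^{2^{n-2}}\}$, and the two possibilities produce $D_{2^n}$ and $Q_n$ (equivalently $D_8$ and $Q_3$ when $n=3$). Finally, routine checks confirm that each presentation defines a non-abelian group of order $p^n$ and that the listed groups are pairwise non-isomorphic — e.g. $|[P,P]|=p$ for $M_n(p)$ but $2^{n-2}$ for $D_{2^n},Q_n,SD_n$, while the numbers of involutions are $1+2^{n-1}$, $1$ and $1+2^{n-2}$, respectively.

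The main obstacle is the bookkeeping in these last two steps: justifying that for $p=2$ the three candidate actions $a^b\in\{a^{-1},a^{1+2^{n-2}},a^{-1+2^{n-2}}\}$ are pairwise inequivalent and cannot be simplified away by any substitution $a\mapsto a^j$, $b\mapsto b^m$; and carrying out the $2$-adic arithmetic in $(a^kb)^2$ carefully, since the correction term that is divisible by $p$ — and hence vanishes modulo $p^{n-1}$ — in the odd case need not vanish when $p=2$, and it is precisely this term that dictates whether $b$ can be rechosen with $b^2=1$ or whether $b^2=a^{2^{n-2}}$ is forced.
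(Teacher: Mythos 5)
The paper does not prove this theorem; it is quoted directly from Gorenstein \cite[Chapter 5, Theorem 4.4]{Daniel}, so there is no in-paper argument to compare against. Your sketch is the standard classification proof and is essentially correct: $H$ is normal and self-centralizing, so conjugation by $b$ induces an element of order $p$ in $\mathrm{Aut}(H)$, which must be one of the listed residues $r$; the identity $(a^kb)^p=a^{k(1+r+\cdots+r^{p-1})}b^p$ together with $b^p\in C_H(b)$ then pins down the presentations; and the invariants you name (the subgroup of $\mathrm{Aut}(H)$ induced by $P$, the order of $[P,P]$, and the involution counts) do separate the four families when $p=2$. The one step you flag as delicate --- that for $p=2$ and $r=1+2^{n-2}$ the element $b$ can be rechosen with $b^2=1$ --- does go through: here $C_H(b)=\langle a^2\rangle$, so $b^2=a^{2j}$, and $(a^kb)^2=a^{2k(1+2^{n-3})+2j}$ can be made trivial because $1+2^{n-3}$ is odd, hence invertible modulo $2^{n-2}$, for all $n\geq 4$.
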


 No two of the groups $D_{2^{n}}, Q_{n}, SD_{n},$ or $M_{n}(p)$ are isomorphic to each other  
(see\,\cite[Chapter 5, Theorem 4.3 (iii)]{Daniel}). We will prove the following results:

	\begin{theorem}
\label{MainResult1}
Let $n\geq 4$ be a natural number and let $G:=SD_{n}$. Then
 \[\Delta(G)=
 \left\{
  \begin{array}{@{}ll@{}}
  2 \,\,\,\mathrm{if}\, \,n=4,\,\\
  
   3   \,\,\,\mathrm{if}\, \,n\geq 5.\,\,\,\,      \\
             \end{array}
 \right. \] 
\end{theorem}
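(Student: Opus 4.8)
The plan is to first cut the problem down to a handful of normally generating sets, and then to compute each resulting word norm by hand.

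\textbf{Reduction to three generating pairs.} Write $G=SD_n$ and note that $G$ is a $2$-generated finite $p$-group with $\Phi(G)=\langle a^2\rangle$, so $d(G)=2$ and $G/\Phi(G)\cong C_2\times C_2$. Since $G$ is nilpotent, a subset normally generates $G$ if and only if it generates $G$ (a proper subgroup lies in a maximal, hence normal, subgroup). Because $\|g\|_{S'}\le\|g\|_S$ whenever $S\subseteq S'$ both normally generate $G$, the supremum defining $\Delta(G)$ is attained on irredundant generating sets, which by the Burnside basis theorem all have exactly $2$ elements; and $\|G\|_{\varphi(S)}=\|G\|_S$ for every $\varphi\in\mathrm{Aut}(G)$. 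A generating pair is determined modulo $\langle a^2\rangle$ by an unordered pair of distinct nonzero elements of $C_2\times C_2$, and using that $\langle a\rangle$ is characteristic one checks that $\mathrm{Aut}(G)$ acts transitively on the generating pairs lying over each such pair of cosets. Hence $\Delta(G)=\max\bigl(\|G\|_{\{a,b\}},\,\|G\|_{\{a,ab\}},\,\|G\|_{\{b,ab\}}\bigr)$, with the three pairs distinguished already by the orders of their elements, namely $(2^{n-1},2)$, $(2^{n-1},4)$ and $(2,4)$.

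\textbf{Conjugacy data.} The only non-routine computations are the relevant conjugacy classes. Using $b a^i b=a^{i(-1+2^{n-2})}$ and $\gcd(2-2^{n-2},2^{n-1})=2$, one shows: $b^G$ is the set of all $2^{n-2}$ elements of order $2$ outside $\langle a\rangle$ (the $a^{2k}b$); $(ab)^G=((ab)^{-1})^G$ is the set of all $2^{n-2}$ elements of order $4$ outside $\langle a\rangle$ (the $a^{j}b$ with $j$ odd); while $a^G=\{a,a^{-1+2^{n-2}}\}$ and $(a^{-1})^G=\{a^{-1},a^{1-2^{n-2}}\}$. One also records the elementary fact that a product of two elements lying outside $\langle a\rangle$ always lands in $\langle a\rangle$, and more precisely: two reflections of order $2$, or two reflections of order $4$, multiply to $a^{\mathrm{even}}$ and such products realise \emph{every} even power of $a$; a reflection of order $2$ times one of order $4$ multiplies to $a^{\mathrm{odd}}$ and realises \emph{every} odd power.

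\textbf{The three norms, and the obstacle.} For $S=\{b,ab\}$ the conjugacy data give $\mathrm{Conj}_G(S^{\pm1})=G\setminus\langle a\rangle$, so every reflection has norm $1$ and, by the last remark, every nontrivial power of $a$ is a product of two reflections; thus $\|G\|_{\{b,ab\}}=2$ for all $n\ge4$ (a nontrivial power of $a$ has norm $\ge2$ since no reflection lies in $\langle a\rangle$). For $S=\{a,b\}$ and $S=\{a,ab\}$ one has $\mathrm{Conj}_G(S^{\pm1})\cap\langle a\rangle=\{a^{\pm1},a^{\pm(1-2^{n-2})}\}$; if $n=4$ this is already the full set of odd powers of $a$, and a short check (odd powers have norm $1$, even powers norm $2$, and each reflection equals one odd power of $a$ times one reflection in $\mathrm{Conj}_G(S^{\pm1})$) yields $\|G\|_S=2$. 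If $n\ge5$, then $a^3$ lies neither in $\mathrm{Conj}_G(S^{\pm1})$ nor in any product of two of its members (such a product is an even power of $a$ or a reflection), so $\|a^3\|_S=3$; conversely $a^m=a\cdot a^{m-1}$ and $a^mb=a\cdot a^{m-1}b$ for $m$ odd, together with the norm-$\le2$ facts above, show every element has norm $\le3$, so $\|G\|_S=3$. Combining, $\Delta(G)=\max\{2,2,2\}=2$ for $n=4$ and $\Delta(G)=\max\{3,3,2\}=3$ for $n\ge5$. The main obstacle is the conjugacy computation, above all the assertion that the class of $ab$ is \emph{all} order-$4$ elements outside $\langle a\rangle$: this is what makes $\mathrm{Conj}_G(\{b,ab\}^{\pm1})$ the full set of reflections (pinning that diameter at $2$) and drives the upper bounds in the other two cases; what remains is bookkeeping of exponents modulo $2^{n-1}$ in the various products of reflections.
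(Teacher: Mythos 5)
Your proposal is correct and follows essentially the same route as the paper: reduce to three types of two-element generating sets, compute the conjugacy classes of $a^{\pm 1}$ and of the two kinds of reflections, and then settle each of the three word norms by parity bookkeeping (with an odd power of $a$ such as $a^3$ as the norm-$3$ witness for $n\ge 5$). The only genuine difference is the reduction: you invoke the Burnside basis theorem and transitivity of $\mathrm{Aut}(SD_n)$ on generating pairs over each pair of $\Phi(G)$-cosets to normalize to the representatives $\{a,b\}$, $\{a,ab\}$, $\{b,ab\}$, whereas the paper shows directly that every normally generating set contains a two-element normally generating subset of one of the three shapes and then computes with arbitrary exponents $a^{\hat o_1}$, $a^{v_1}b$, $a^{\hat o_2}b$. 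Your version buys cleaner representatives at the cost of verifying that $a\mapsto a^{o}$, $b\mapsto a^{v}b$ ($o$ odd, $v$ even) really are automorphisms; the paper's version avoids automorphisms entirely but carries the general exponents through the computation. Both work, and the computational core (the conjugacy classes, the "product of two reflections" analysis, and the obstruction for odd powers of $a$ when $n\ge 5$) is identical.
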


	\begin{theorem}
\label{Result2}
 Let $n\geq 3$ be a natural number and let $G:=Q_{n}$. Then
 \[\Delta(G)=
 \left\{
  \begin{array}{@{}ll@{}}
  2 \,\,\,\mathrm{if}\, \,n=3,\,\\

   3   \,\,\,\mathrm{if}\, \,n\geq 4.\,\,\,\,      \\
             \end{array}
 \right. \] 
\end{theorem}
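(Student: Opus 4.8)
The plan is to establish the two bounds $\Delta(G)\le 3$ and $\Delta(G)\ge 3$ separately when $n\ge 4$, and the analogous statements for $n=3$. Throughout I work with the cyclic maximal subgroup $A:=\langle a\rangle$ of order $2^{n-1}$, the subgroup $G'=[G,G]=\langle a^{2}\rangle$, and the quotient $G/G'\cong C_{2}\times C_{2}$; every fact I use is immediate from the relations $ba=a^{-1}b$ and $b^{2}=a^{2^{n-2}}$. Concretely I will record: (i) every finite normally generating subset $S$ of $G$ satisfies $|S|\ge 2$, since its image must generate $C_{2}\times C_{2}$; (ii) for odd $t$ the conjugacy class of $a^{t}$ is $\{a^{t},a^{-t}\}$; (iii) for every $j$ the element $a^{j}b$ has order $4$, its inverse is $a^{\,j+2^{n-2}}b$, and its conjugacy class is the whole parity set $C_{j}:=\{a^{m}b:m\equiv j\ (\mathrm{mod}\ 2)\}$, so that $\mathrm{Conj}_{G}(\{a^{j}b\}^{\pm1})=C_{j}$; (iv) the product of any two elements of a fixed $C_{j}$ runs over all of $\langle a^{2}\rangle$; and (v) the product of an element of $C_{0}$ with one of $C_{1}$ runs over all odd powers of $a$. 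I also note that under $G\to G/G'$ the odd powers of $a$, the set $C_{0}$, and the set $C_{1}$ map onto the three nontrivial elements of $C_{2}\times C_{2}$.

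For the upper bound, let $S$ be a finite normally generating subset of $G$. Since $\mathrm{Conj}_{G}(T^{\pm1})\subseteq\mathrm{Conj}_{G}(S^{\pm1})$ whenever $T\subseteq S$, it suffices to produce two elements $x,y\in S$ with $\|g\|_{\{x,y\}}\le 3$ for all $g\in G$. By (i) the image of $S$ in $G/G'$ contains two distinct nontrivial elements. If these include the images of both $C_{0}$ and $C_{1}$, pick $y\in S\cap C_{0}$ and $x\in S\cap C_{1}$; then $\mathrm{Conj}_{G}(\{x,y\}^{\pm1})=C_{0}\cup C_{1}=G\setminus A$, and by (iv) and (v) every element of $G$ is a product of at most two of these, so the diameter is even $\le 2$. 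Otherwise the image of the odd powers of $a$ occurs, so pick an odd power $x=a^{t}\in S$ and some $y\in S\cap C_{\varepsilon}$ with $\varepsilon\in\{0,1\}$; then $\mathrm{Conj}_{G}(\{x,y\}^{\pm1})=\{a^{t},a^{-t}\}\cup C_{\varepsilon}$, and I check case by case: elements of $\langle a^{2}\rangle$ have norm $\le 2$ by (iv); an odd power $a^{r}$ equals $a^{t}\cdot a^{\,r-t}$ with $a^{\,r-t}\in\langle a^{2}\rangle$, so has norm $\le 3$ (indeed $\le 1$ if $n=3$, since then the odd powers of $a$ are exactly $\{a,a^{-1}\}=\{a^{t},a^{-t}\}$); elements of $C_{\varepsilon}$ have norm $1$; and an element $a^{m}b$ with $m\not\equiv\varepsilon\ (\mathrm{mod}\ 2)$ equals $a^{t}\cdot(a^{\,m-t}b)$ with $a^{\,m-t}b\in C_{\varepsilon}$, so has norm $\le 2$. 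Hence $\Delta(G)\le 3$ for $n\ge 4$ and $\Delta(G)\le 2$ for $n=3$.

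For the lower bound I take $S_{0}:=\{a,b\}$, which generates and hence normally generates $G$, and note $\mathrm{Conj}_{G}(S_{0}^{\pm1})=\{a,a^{-1}\}\cup C_{0}$. When $n=3$, the element $ab$ is not in this set, so $\|ab\|_{S_{0}}\ge 2$ and $\Delta(G)\ge 2$. When $n\ge 4$, I claim $\|a^{3}\|_{S_{0}}=3$: a product of two elements of $\mathrm{Conj}_{G}(S_{0}^{\pm1})$ lies in $\langle a^{2}\rangle\cup C_{1}$ by (iv), (v) and the computation $a^{\pm1}\cdot C_{0}\subseteq C_{1}\supseteq C_{0}\cdot a^{\pm1}$, and $a^{3}$ lies in neither part, because $2^{n-1}\ge 8$ forces $a^{3}\notin\{a,a^{-1}\}$ while $a^{3}$ is an odd power and not of the form $a^{m}b$; since $a^{3}=a\cdot a\cdot a$, this yields $\|a^{3}\|_{S_{0}}=3$ and hence $\Delta(G)\ge 3$. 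Combined with the upper bound, $\Delta(G)=3$ for $n\ge 4$ and $\Delta(G)=2$ for $n=3$.

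The main obstacle is not any individual computation but the organization: one must see that only a well-chosen two-element subset of $S$ matters, and that outside $A$ the group is already "small" (norm $\le 1$, with products of two elements covering $\langle a^{2}\rangle$ and all odd powers of $a$), so that the single genuinely constrained configuration is the last one above, where $S$ contributes merely a generator of $A$ together with one element outside $A$. It is exactly there that, for $n\ge 4$, an odd power of $a$ is forced to have norm $3$, which is what pins the value at $3$ rather than $2$.
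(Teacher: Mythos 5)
Your proposal is correct and follows essentially the same route as the paper: reduce to a two-element normally generating subset containing one element outside $\langle a\rangle$ together with either an odd power of $a$ or an element of the other coset of $\langle a^2\rangle\langle b\rangle$, then use the conjugacy classes $\{a^t,a^{-t}\}$ and the parity classes $\{a^mb\}$ plus the product formulas to bound the word norm, the odd powers of $a$ being the only elements forcing length $3$ when $n\ge 4$. The only cosmetic differences are that you package the case split via $G/G'\cong C_2\times C_2$ rather than via the three maximal subgroups, and you prove the upper and lower bounds separately (with the explicit witness $a^3$ for $S=\{a,b\}$) instead of computing $\|G\|_{S}$ exactly for each of the three representative sets.
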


	\begin{theorem}
\label{Result3}
 Let $p$ be a prime number and $n$ be a natural number, where $n\geq 4$ if $p=2$ and $n\geq 3$ if $p$ is odd. Let $G:=M_n{(p)}$. Then
 \[\Delta(G)=
 \left\{
  \begin{array}{@{}ll@{}}
  2^{n-3}+1 \,\,\,\mathrm{if}\, \,p=2,\,\\

   \frac{p^{n-2}+p-2}{2}   \,\,\,\mathrm{if}\, \,p \,\,\mathrm{is\,\,odd.}\,\,\,      
             \end{array}
 \right.\] 
 
\end{theorem}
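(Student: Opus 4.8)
The plan is to compute $\Delta(M_n(p))$ by establishing matching upper and lower bounds. I would first record the basic arithmetic of $G = M_n(p)$: writing $Z = \langle a^{p^{n-2}}\rangle$ for the (cyclic, order $p$) center, the key identity is $a^b = a^{1+p^{n-2}}$, so conjugation by $b$ (and more generally by $b^k$) multiplies $a$ by an element of $Z$, and for any $g \in G$ the commutator $[a,g]$ lies in $Z$. From this one gets the crucial product formula: for $g \in G$ and integers $i$, a conjugate $g^{-1}a^i g$ equals $a^i z$ for some $z \in Z$ depending linearly on $i$ and on the ``$b$-part'' of $g$. Every element of $G$ has the form $a^i b^j$ with $0 \le i < p^{n-1}$, $0 \le j < p$, and since $Z \le \langle a \rangle$, the abelianization is $G/[G,G] \cong C_{p^{n-2}} \times C_p$, which immediately forces any normally generating set $S$ to contain an element whose image generates the $C_{p^{n-2}}$-factor — i.e. some $s \in S$ with $s = a^i b^j$ where $p \nmid i$.

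For the \textbf{upper bound}, I would argue that it suffices to consider a single-element normally generating set $S = \{s\}$ with $s = a^i b^j$, $p \nmid i$ (reducing $\Delta$ to $\Delta_1$ via the standard observation that adding generators only shortens word norms, combined with the abelianization constraint; one must also handle $j \ne 0$ versus $j = 0$). Conjugates of $s$ and $s^{-1}$ all have the form $a^{\pm i}b^{\pm j}z$ with $z \in Z$. A product of $m$ such conjugates, after collecting the $a$-powers and $b$-powers and sweeping all the commutators and $Z$-contributions to one side, has the form $a^{\epsilon} b^{\delta}\cdot(\text{element of }Z)$ where $\epsilon$ is a signed sum of the $\pm i$ and $\delta$ a signed sum of the $\pm j$. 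To hit an arbitrary target $a^t b^u$: one first fixes the $b$-exponent mod $p$ (costs at most a bounded number of factors), then adjusts the $\langle a \rangle$-part — since $p \nmid i$, signed sums of copies of $i$ reach any residue mod $p^{n-1}$, but reaching residue $t$ economically while controlling the accumulated $Z$-error is exactly where the bound $\frac{p^{n-2}+p-2}{2}$ (resp. $2^{n-3}+1$) comes from. I expect the sharp count to come from a careful analysis of how many $\pm i$ summands are needed so that both $\sum \pm i \equiv t \pmod{p^{n-1}}$ \emph{and} the induced $Z$-term matches; the freedom in choosing signs should let one do this in roughly half of $p^{n-2}$ steps, which is the source of the factor $\frac12$ and the difference between the $p=2$ and odd-$p$ formulas.

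For the \textbf{lower bound}, I would exhibit a specific normally generating set $S$ (the natural candidate is $S = \{a b\}$ or $S = \{a\}$ together with what is forced, or more likely a cleverly chosen generator) and a specific ``hard'' target element $g_0$ — presumably a generator of $\langle a \rangle$ or an element like $a^{(p^{n-2}\pm 1)/2}$ that is maximally far from the identity — and show $\|g_0\|_S$ is at least the claimed value. The argument should be a counting/congruence obstruction: track the invariant ``image in $\langle a\rangle/Z \cong C_{p^{n-2}}$ together with the $Z$-coordinate'' and show each conjugate of $s^{\pm1}$ can change this pair only in a constrained way, so that fewer than $\frac{p^{n-2}+p-2}{2}$ factors cannot reach $g_0$.

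The \textbf{main obstacle} I anticipate is the bookkeeping of the central ($Z$-valued) correction terms when multiplying many conjugates: because conjugation by $b$ perturbs $a$ multiplicatively by a central element, the product of $m$ conjugates of $s^{\pm 1}$ carries a ``second-order'' term in $Z$ that depends in a bilinear way on the sequence of exponents and signs chosen, and optimizing the word length means simultaneously solving a congruence mod $p^{n-1}$ in the $\langle a\rangle$-part and a congruence mod $p$ in this bilinear term. Getting the constant exactly right — distinguishing $2^{n-3}+1$ for $p=2$ from $\frac{p^{n-2}+p-2}{2}$ for odd $p$ — will require pinning down precisely when the sign freedom suffices, and this is where I would expect to split into the two cases and do the delicate part of the computation.
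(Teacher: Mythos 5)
There is a genuine gap at the heart of your upper-bound strategy: you propose to reduce to a \emph{single-element} normally generating set $S=\{s\}$, but no such set exists for $G=M_n(p)$. As you yourself observe, the abelianization of $G$ is $C_{p^{n-2}}\times C_p$, which is non-cyclic; since normal generation of $G$ by $S$ forces the image of $S$ to generate $G/[G,G]$, every normally generating set needs at least two elements. (Concretely, the normal closure of a single element $a^ib^j$ is $\langle z, a^ib^j\rangle$ with $z:=a^{p^{n-2}}$ central, and this is contained in a maximal subgroup.) So $\Gamma_1(G)=\emptyset$ and the reduction to $\Delta_1$ collapses. The paper instead reduces to $\Delta_2$: it shows (Lemmas \ref{orderpn-1inmnp}--\ref{standardgeneratorofmnp}) that every normally generating set contains an element $x$ of order $p^{n-1}$ together with some $x^{\ell}y^{j}$, $0<j<p$, where $(x,y)$ is a standard generator pair, and then (Lemma \ref{delta2SDn=deltaSDn}) that $\Delta(G)=\Delta_2(G)$. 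The upper bound must then be proved uniformly for \emph{all} sets of the form $\{a,a^{\ell}b^{j}\}$, which is what Lemmas \ref{namewithoutdate}--\ref{30635} do (with a separate treatment of the exceptional case $(n,p)=(3,3)$, which your sketch does not flag).

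Your lower-bound sketch has a second, related problem: the hard target cannot be taken inside $\langle a\rangle$. With $T=\{a,b\}$, an element $a^{t}$ costs at most roughly $\tfrac{p^{n-2}-1}{2}$ conjugates (resp. $2^{n-3}$ for $p=2$); the extra $+\tfrac{p-1}{2}$ (resp. $+1$) in the formula is forced only by a target with a nontrivial $b$-component, because each conjugate of $a^{\pm1}$ contributes nothing to the $b$-exponent. The paper uses $g_0=a^{2^{n-3}}b$ for $p=2$ and $g_0=a^{(p^{n-2}-1)/2}b^{(p-1)/2}$ for odd $p$, and the obstruction is exactly the bookkeeping you anticipate (Lemma \ref{3oxt23}): a product of $s$ conjugates from $\langle z\rangle\{a^{\pm1}\}$ and $t$ from $\langle z\rangle\{b^{\pm1}\}$ equals $z^{r}a^{s_0}b^{t_0}$ with $|s_0|\le s$, $|t_0|\le t$, so $s\ge 2^{n-3}$ (resp. $\tfrac{p^{n-2}-1}{2}$) and $t\ge1$ (resp. $\tfrac{p-1}{2}$). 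Your intuition about tracking the central correction terms is the right one, but as written the proposal would not yield either the correct upper bound (wrong class of generating sets) or the full lower bound (wrong choice of hard element).
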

	
	With Theorem \ref{FawazResult} and Theorems \ref{MainResult1}\large{-}\ref{Result3}, we have completely calculated the conjugacy diameters of non-abelian finite $p$-groups with cyclic maximal subgroups.

	\section{Preliminaries} 
	\label{preliminaries} 
	
	In this section, we collect some results and fix some notation needed for the proofs of Theorems \ref{MainResult1}-\ref{Result3}

\begin{definition} \label{defbxn}\cite[Section 2]{Ben} 
Let $X$ be a subset of a group $G$. For any $n\geq 0,$ we define $B_X(n)$ to be the set of all elements of $G$ which can be expressed as a product of at most $n$ conjugates of elements of $X$ and their inverses.
\end{definition}

 By Definition \ref{defbxn}, we have \[ \{1\}= B_{X}(0)\subseteq B_{X}(1)\subseteq B_{X}(2)\subseteq \dots\]    
 \vspace{0.5 mm}
  The next result follows from the above definition.
 \begin{lemma}
 \cite[Lemma $2.3\,(iii)$]{Ben} \label{lemma2.3} Let $G$ be a group, let $X, Y\subseteq G$ and $n,m\in \mathbb{N}$.\,Then $B_{X}(n)B_{X}(m) = B_{X}(n + m).$
            \label{lemma2.3:03}      
    \end{lemma}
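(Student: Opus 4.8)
The plan is to work directly from Definition \ref{defbxn}. Write $C := \mathrm{Conj}_G(X^{\pm 1})$ for the set of conjugates of elements of $X$ and their inverses, so that $B_X(k)$ is precisely the set of products $c_1 \cdots c_j$ with $c_1,\dots,c_j \in C$ and $0 \le j \le k$, where the empty product ($j=0$) is understood to be the identity $1$. In particular $1 \in B_X(k)$ for every $k \ge 0$, a fact I will use below. I would establish the claimed identity by proving the two inclusions $B_X(n)B_X(m) \subseteq B_X(n+m)$ and $B_X(n+m) \subseteq B_X(n)B_X(m)$ separately; note that the set $Y$ appearing in the hypotheses plays no role in this part of the statement.

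For the inclusion $B_X(n)B_X(m) \subseteq B_X(n+m)$, I would take arbitrary elements $g \in B_X(n)$ and $h \in B_X(m)$ and write $g = c_1 \cdots c_j$ and $h = d_1 \cdots d_k$ with all $c_i, d_i \in C$, $j \le n$ and $k \le m$. Concatenating these expressions exhibits $gh = c_1 \cdots c_j d_1 \cdots d_k$ as a product of $j + k \le n + m$ elements of $C$, whence $gh \in B_X(n+m)$.

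For the reverse inclusion $B_X(n+m) \subseteq B_X(n)B_X(m)$, I would take $g \in B_X(n+m)$ and write $g = c_1 \cdots c_\ell$ with $c_i \in C$ and $\ell \le n+m$. If $\ell \le n$, then $g \in B_X(n)$ and, since $1 \in B_X(m)$, we have $g = g \cdot 1 \in B_X(n)B_X(m)$. If instead $n < \ell \le n+m$, I would split the word after its $n$-th letter, writing $g = (c_1 \cdots c_n)(c_{n+1} \cdots c_\ell)$; here the first factor lies in $B_X(n)$ and the second factor is a product of $\ell - n \le m$ elements of $C$, hence lies in $B_X(m)$, so again $g \in B_X(n)B_X(m)$. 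Combining the two inclusions gives the desired equality.

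This argument is entirely elementary, so there is no serious obstacle; the only points requiring care are the bookkeeping forced by the word \emph{at most} in Definition \ref{defbxn} --- which is what legitimizes both the padding of short words and the prefix/suffix split --- and the explicit use of the empty-product convention $1 \in B_X(m)$ to treat the case of words of length at most $n$ in the reverse inclusion.
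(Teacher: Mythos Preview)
Your argument is correct. The paper does not actually prove this lemma; it simply cites \cite[Lemma~2.3\,(iii)]{Ben} and remarks that ``the next result follows from the above definition,'' so your direct verification from Definition~\ref{defbxn} is precisely what that remark amounts to.
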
 
\noindent The following terminology is not standard, but it will be useful in this paper.
    \begin{definition} 
	Let $n$ be a natural number. 	
	\begin{enumerate}[(i)]

	\item  If $n\geq 4$ and $G=SD_n,$ then a pair $(a,b)$ of elements $a,b\in G$ is called a standard generator pair if $ord(a)=2^{n-1}, ord(b)=2, bab=a^{2^{n-2}-1}.$  
	
\item  If $n\geq 3$ and $G=Q_n,$ then a pair $(a,b)$ of elements $a,b\in G$ is called a standard generator pair  if $ord(a)=2^{n-1}, b^2 = a^{2^{n-2}}, b^{-1}ab=a^{-1}.$  
	
	\item  If $G=M_n(p),$ where $n\geq 4$ and $p=2$ or $n\geq 3$ and $p$ is an odd prime, then a pair $(a,b)$ of elements $a,b\in G$ is called a standard generator pair if $ord(a)=p^{n-1}, ord(b)=p, b^{-1}ab=a^{p^{n-2}+1}. $

	\end{enumerate}
	\end{definition}

	

	 

	 

                              
	

	\begin{lemma}
	\label{normalsubsinSDN}
Let $G=SD_n$ for some $n\geq 4$ or $G=Q_n$ for some $n\geq 3,$ and let $(a,b)$ be a standard generator pair of $G.$
The subgroups $\langle a\rangle , \langle a^{2}\rangle \langle ab\rangle\,, \langle a^{2}\rangle \langle b\rangle$ are proper normal subgroups of $G.$
	\end{lemma}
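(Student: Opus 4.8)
The plan is to check directly that each of the four listed subgroups is normal in $G$, handling the two families $SD_n$ and $Q_n$ in parallel since the argument is essentially the same.

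First I would observe that $\langle a\rangle$ has index $2$ in $G$ (it is the cyclic maximal subgroup), and every subgroup of index $2$ in any group is normal; alternatively one reads off $b^{-1}ab \in \langle a\rangle$ directly from the standard generator relations ($a^b = a^{-1+2^{n-2}}$ in the semidihedral case, $a^b = a^{-1}$ in the quaternion case), so $\langle a\rangle$ is $b$-invariant and clearly $a$-invariant, hence normal. Next, $\langle a^2\rangle$ is a characteristic subgroup of the cyclic normal subgroup $\langle a\rangle$ (it is generated by all squares of elements of $\langle a\rangle$, or equivalently it is the unique subgroup of $\langle a\rangle$ of index $2$), hence normal in $G$; it has index $4$ in $G$. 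For the two remaining subgroups I would note that $\langle a^2\rangle\langle b\rangle$ and $\langle a^2\rangle\langle ab\rangle$ each contain $\langle a^2\rangle$ as a subgroup of index $2$ (the elements $b$ and $ab$ have order a power of $2$ but lie outside $\langle a\rangle \supseteq \langle a^2\rangle$, and one checks $b^2, (ab)^2 \in \langle a^2\rangle$ — e.g. $b^2 = 1$ or $a^{2^{n-2}}$, and $(ab)^2 = a\cdot a^b = a^{2^{n-2}}$ or similar, all lying in $\langle a^2\rangle$), so each of these sets is genuinely a subgroup of order $2^{n-1}$, i.e. of index $2$ in $G$. Being index $2$, they are normal.

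The cleanest writeup would therefore be: (1) recall that index-$2$ subgroups are normal; (2) verify $|G:\langle a\rangle| = 2$, so $\langle a\rangle \trianglelefteq G$; (3) verify $\langle a^2\rangle$ is characteristic in $\langle a\rangle$ (as the Frattini-type subgroup $(\langle a\rangle)^2$, or the unique index-$2$ subgroup of the cyclic group $\langle a\rangle$), hence $\langle a^2\rangle \trianglelefteq G$; (4) compute $b^2$ and $(ab)^2$ from the relations and observe they lie in $\langle a^2\rangle$, together with $b, ab \notin \langle a\rangle$, to conclude $\langle a^2\rangle\langle b\rangle$ and $\langle a^2\rangle\langle ab\rangle$ are subgroups of index $2$; (5) invoke index $2$ again. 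All four subgroups are proper since each has index at least $2$.

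I do not expect any real obstacle here — the whole statement is a routine consequence of the "index $2$ implies normal" principle once one knows the cardinalities. The only point requiring a small computation is step (4), namely confirming $(ab)^2 \in \langle a^2\rangle$ in each of the two groups and that $b$, $ab$ represent the nontrivial coset of $\langle a\rangle$; in the quaternion case one uses $b^2 = a^{2^{n-2}} = (a^2)^{2^{n-3}} \in \langle a^2\rangle$ and $(ab)^2 = abab = a\,a^b\,b^2 = a\cdot a^{-1}\cdot a^{2^{n-2}} = a^{2^{n-2}} \in \langle a^2\rangle$, and in the semidihedral case $b^2 = 1$ and $(ab)^2 = a\cdot a^b = a\cdot a^{-1+2^{n-2}} = a^{2^{n-2}} \in \langle a^2\rangle$. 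With these identities in hand the orders are forced and normality follows immediately.
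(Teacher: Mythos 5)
Your proof is correct and follows essentially the same route as the paper: both arguments reduce to showing each listed subgroup has order $2^{n-1}$, hence index $2$ in $G$, and then invoke a standard normality criterion (you cite ``index $2$ implies normal,'' the paper cites ``maximal subgroups of finite $2$-groups are normal'' --- interchangeable here). Your write-up is slightly more careful in that you actually verify $\langle a^2\rangle\langle b\rangle$ and $\langle a^2\rangle\langle ab\rangle$ are subgroups of the stated order via $b^2,(ab)^2\in\langle a^2\rangle$, a detail the paper leaves implicit; the only blemish is that you refer to ``four listed subgroups'' when the statement lists three.
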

	\begin{proof}
	We have 
	
	\begin{equation*}
	|\langle a \rangle|= |\langle a^{2}  \rangle \langle   b \rangle|= |\langle  a^{2}\rangle  \langle ab \rangle|=2^{n-1}.
\end{equation*} 
	
	So, each of these subgroups is a maximal subgroup of $G.$ This proves the lemma since each maximal subgroup of a finite $2$-group is a proper normal subgroup. 
	\end{proof}

\begin{remark}
\label{easycalculation1}
Let $n\geq 4$ be a natural number and $G=SD_n.$ Let $(a,b)$ be a standard generator pair of $G.$
 Let $m\in \mathbb{Z},$ then one can easily show

\[ (a^{-1+2^{n-2}})^{m}=
 \left\{
  \begin{array}{@{}ll@{}}
  a^{-m}  \,\,\,\mathrm{if}\, \,m\,\, \mbox{is even,}\, \,\\
  
   a^{-m} a{^{2}} ^{n-2} \,\,\,\mathrm{if}\, \,m\,\mbox{is odd.}\,\,\,\,\,      \\
             \end{array}
 \right. \]

\end{remark}

\begin{lemma}
\label{conjugacyclassesinSDn}
Let $n\geq 4$ be a natural number and $G=SD_n.$ Let $(a,b)$ be a standard generator pair of $G.$
The following hold.
\begin{enumerate}[(i)]

\item  The conjugacy class of $a^{2}b$ is $\big\{a^{r}b\big| \,r\,\mbox{is even and} \,\,0\leq r< 2^{n-1}\big\}.$
\label{conjugacyclassesinSDn:2}

\item The conjugacy class of $ab$ is $\big\{a^{r}b\big| \,r\,\mbox{is odd and} \,\,0< r< 2^{n-1}\big\}.$
\label{conjugacyclassesinSDn:1}

\item If $0\leq m< 2^{n-1},$ then the conjugacy class of $a^{m}$ is  $\big\{a^{m}, (a^{-1+2^{n-2}})^{m}\big\}.$
\label{conjugacyclassesinSDn:3}

\item If $0\leq m <2^{n-1}$ is even, then $(a^{m}b)^{-1}=a^{{m}^{\prime}}b$ for some even $0\leq m^{\prime} <2^{n-1}.$

\label{conjugacyclassesinSDn:4}

 \item If $0<m <2^{n-1}$ is odd, then $(a^{m}b)^{-1}=a^{{m}^{\prime}}b$ for some odd $0< m^{\prime} <2^{n-1}.$
\label{conjugacyclassesinSDn:5}

\end{enumerate} 
\begin{proof}

 Let $0\leq m < 2^{n-1}$ and $\ell\in \mathbb{Z}.$ For (\ref{conjugacyclassesinSDn:2}) and (\ref{conjugacyclassesinSDn:1}), we have

\begin{align*}
(a^{m}b)^{a^{\ell}} & =a^{-\ell} \, a^{m}b\, a^{\ell}   \\&
=  a^{m-\ell}\,b\,a^{\ell}\,b\,b\\&
=  a^{m-\ell}\,(b\,a\,b)^{\ell}\,b\\&
=  a^{m-\ell}\,(a^{-1+2^{n-2}})^{\ell}\,b\\&
\stackrel{\mathrm{(\ref{easycalculation1})
}}{=}
 \left\{
  \begin{array}{@{}ll@{}}
  a^{m-2\ell}\, b\, ,  \,\,\,\mathrm{if}\, \,\ell\,\, \mbox{is even}\, ,\,\\
   a^{m-2\ell\textcolor{black}{+}2^{n-2}}\, b\, , \,\,\,\mathrm{if}\, \,\ell\,\mbox{is odd}\,.\,\,\,\,      \\
             \end{array}
 \right.  
 \end{align*}
\begin{align*}
(a^{m}b)^{a^{\ell}b} & =(a^{\ell}b)^{-1}  (a^{m}b) (a^{\ell}b)   \\&
=  b\, a^{-\ell}\,a^{m}\,b\,a^{\ell}\, b  \\&
=  (b\, a^{-\ell}\, b)\, (b\, a^{m}\,b)\,a^{\ell}\, b  \\&
=  (b\, a\, b)^{-\ell}\, (b\, a\,b)^{m}\,a^{\ell}\, b  \\&
=  (a^{-1+2^{n-2}})^{-\ell}\, (a^{-1+2^{n-2}})^{m}\,a^{\ell}\, b  \\&
=  a^{\textcolor{black}{2\ell-m+2^{n-2}(m -\ell)}}\,b.\\
 \end{align*}
Thus, if $m$ is even, then the conjugacy class of $a^{m}b$ is $\big\{a^{r}b\big| \,r\,\mbox{is even and} \,\,\textcolor{black}{0\leq r< 2^{n-1}}\big\}.$ In particular,  (\ref{conjugacyclassesinSDn:2}) holds.\,If $m$ is odd, then the conjugacy class of $a^{m}b$ is $\big\{a^{r}b\big| \,r\,\mbox{is odd and} \,\,\textcolor{black}{0< r< 2^{n-1}}\big\}.$ In particular, (\ref{conjugacyclassesinSDn:1}) holds.

For (\ref{conjugacyclassesinSDn:3}), we have 
\begin{align*}
(a^{m})^{a^{\ell}} & =a^{-\ell}\, a^{m}\, a^{\ell}   \\&
=  a^{m}.&
 \end{align*}

\begin{align*}
(a^{m})^{a^{\ell}b} & =(a^{\ell}b)^{-1} \, (a^{m})\, (a^{\ell}b)  \hspace{3.25cm} \\&
=  b\, a^{-\ell}\,a^{m} a^{\ell}b \\&
= b\,\,a^{m}b  \\&
= (b\,\,a\,b)^{m}  \\&
= {(a^{-1+2^{n-2}})}^{m}.
 \end{align*}
This completes the proof of (\ref{conjugacyclassesinSDn:3}).

For (\ref{conjugacyclassesinSDn:4}), we assume that $0\leq m< 2^{n-1}$ is even. We have $a^{m}b\in \langle a^{2} \rangle \langle b \rangle.$ Therefore, we have $(a^{m}b)^{-1}\in \langle a^{2} \rangle \langle b \rangle,$ and $(a^{m}b)^{-1}\notin \langle a^{2} \rangle$ as $a^{m}b\notin \langle a^{2} \rangle.$ It follows that $(a^{m} b)^{-1}= a^{{m}^{\prime}}b$ for some even $0\leq m^{\prime}<2^{n-1}.$

For (\ref{conjugacyclassesinSDn:5}), we assume that $0< m< 2^{n-1}$ is odd. It is clear that $(a^{m}b)^{-1}\notin \langle a \rangle.$ So we have $(a^{m}b)^{-1}= a^{{m}^{\prime}}b$ for some $0\leq m^{\prime}<2^{n-1}.$ Since $a^{m}b\notin \langle a^{2} \rangle \langle b \rangle,$ we have that $m^{\prime}$ is odd.
\end{proof}
\end{lemma}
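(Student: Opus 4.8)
The plan is to put every element of $G$ into one of the normal forms $a^m$ or $a^m b$ with $0\le m<2^{n-1}$, and then to compute the effect of conjugating such an element by the two kinds of group elements, namely $a^\ell$ and $a^\ell b$ (every element of $G$ has one of these two forms). The one computational input needed is Remark~\ref{easycalculation1}, which evaluates $(a^{-1+2^{n-2}})^\ell$ as $a^{-\ell}$ for $\ell$ even and as $a^{-\ell+2^{n-2}}$ for $\ell$ odd; this is precisely what is used to rewrite $b a^\ell b=(bab)^\ell$ as a power of $a$, via the identity $b a^\ell=(a^{-1+2^{n-2}})^\ell b$.

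First I would dispatch (iii). Powers of $a$ commute, so $(a^m)^{a^\ell}=a^m$, and a one-line computation gives $(a^m)^{a^\ell b}=b a^m b=(bab)^m=(a^{-1+2^{n-2}})^m$, independently of $\ell$. Hence the conjugacy class of $a^m$ is $\{a^m,(a^{-1+2^{n-2}})^m\}$ (this set notation correctly covers the possibility that the two elements coincide for a given $m$). For (i) and (ii) I would compute $(a^m b)^{a^\ell}$ and $(a^m b)^{a^\ell b}$ in the same way; using the rewriting above and Remark~\ref{easycalculation1}, the first equals $a^{m-2\ell}b$ when $\ell$ is even and $a^{m-2\ell+2^{n-2}}b$ when $\ell$ is odd, and the second is of the same shape. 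Since $2^{n-2}$ is even, the parity of the exponent of $a$ is unchanged under every conjugation, so the class of $a^m b$ is contained in $\{a^r b : r\equiv m\ (\mathrm{mod}\ 2),\ 0\le r<2^{n-1}\}$; conversely, as $\ell$ ranges over $\mathbb{Z}$ the exponent $m-2\ell$ already hits every integer congruent to $m$ mod $2$, so the containment is an equality. Taking $m=2$ then gives (i), and $m=1$ gives (ii), where the exponent $0$ is excluded because $0$ is even.

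Finally, (iv) and (v) are membership arguments using the proper normal subgroups provided by Lemma~\ref{normalsubsinSDN}. If $m$ is even then $a^m b\in\langle a^2\rangle\langle b\rangle$, so $(a^m b)^{-1}$ lies in this subgroup; since $a^m b\notin\langle a\rangle$ we also have $(a^m b)^{-1}\notin\langle a\rangle\supseteq\langle a^2\rangle$, which forces $(a^m b)^{-1}=a^{m'}b$ with $a^{m'}\in\langle a^2\rangle$, i.e.\ $m'$ even. If $m$ is odd then $(a^m b)^{-1}\notin\langle a\rangle$, so $(a^m b)^{-1}=a^{m'}b$ for some $m'$, and since $a^m b\notin\langle a^2\rangle\langle b\rangle$ we get $(a^m b)^{-1}\notin\langle a^2\rangle\langle b\rangle$, hence $m'$ odd.

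I expect no genuine obstacle here: the proof is a bookkeeping exercise. The only point requiring care is tracking the $+2^{n-2}$ correction term in the conjugation formulas of (i)--(ii) and confirming that it does not disturb parity, together with the small extra step of checking that the conjugacy classes are shown to be \emph{equal} to, not merely contained in, the claimed sets.
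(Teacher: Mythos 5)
Your proposal is correct and follows essentially the same route as the paper: the same conjugation computations by $a^\ell$ and $a^\ell b$ using Remark~\ref{easycalculation1} for (i)--(iii), and the same subgroup-membership arguments via Lemma~\ref{normalsubsinSDN} for (iv)--(v). Your explicit remark that one must check equality (not mere containment) of the classes is a point the paper glosses over; just note that for odd $\ell$ the exponent is $m-2\ell+2^{n-2}$ rather than $m-2\ell$, and that the two cases together (using $2^{n-2}\equiv 0 \bmod 4$ for $n\ge 4$) cover all residues of the right parity.
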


		\begin{remark}
\label{RemarkQn}
Let $n\geq 3$ be a natural number and $G=Q_n.$ Let $(a,b)$ be a standard generator pair of $G.$
 Let $m\in \mathbb{Z}.$ One can easily show that

\begin{enumerate}[(i)]

\item  $b^{-1} a^{m} b= (b^{-1} a b)^{m}=a^{-m}.$
 
\item $b^{4}=1.$

\end{enumerate}

\end{remark}

\begin{lemma}
\label{conjugacyclassesinQn}
Let $n\geq 3$ be a natural number and $G=Q_n.$ Let $(a,b)$ be a standard generator pair of $G.$ The following hold.
\begin{enumerate}[(i)]
\item  The conjugacy class of $a^{2}b$ is $\big\{a^{r}b\big| \,r\,\mbox{is even and} \,\,0\leq r< 2^{n-1}\big\}.$
\label{conjugacyclassesinQn:2}
\item The conjugacy class of $ab$ is $\big\{a^{r}b\big| \,r\,\mbox{is odd and} \,\,0< r< 2^{n-1}\big\}.$
\label{conjugacyclassesinQn:1}

\item If $0\leq m< 2^{n-1},$ then the conjugacy class of $a^{m}$ is  $\big\{a^{m}, a^{-m}\big\}.$
\label{conjugacyclassesinQn:3}

\item If $0\leq m <2^{n-1}$ is even, then $(a^{m}b)^{-1}=a^{{m}^{\prime}}b$ for some even $0\leq m^{\prime} <2^{n-1}.$

\label{conjugacyclassesinQn:4}

 \item If $0<m <2^{n-1}$ is odd, then $(a^{m}b)^{-1}=a^{{m}^{\prime}}b$ for some odd $0< m^{\prime} <2^{n-1}.$
\label{conjugacyclassesinQn:5}

\end{enumerate} 
\begin{proof}

 Let $0\leq m< 2^{n-1}$ and $\ell\in \mathbb{Z}.$ For (\ref{conjugacyclassesinQn:2}) and (\ref{conjugacyclassesinQn:1}), we have

\begin{align*}
(a^{m}b)^{a^{\ell}} & =a^{-\ell} \, a^{m}b\, a^{\ell}   \hspace{4cm}\\&
=  a^{m-\ell}\,b^{2}\,b^{-1}\,a^{\ell}\,b\,b^{3}\\&
=  a^{m-\ell}\,a^{2^{n-2}}\,(b^{-1}\,a\,b)^{\ell}\,b^{2}\,b\\&
=  a^{m-\ell}\,a^{2^{n-2}}\,a^{-\ell}\,a^{2^{n-2}}\,b\\&
=  a^{m-2\ell}\,b.\\
 \end{align*}
\begin{align*}
(a^{m}b)^{a^{\ell}b} & =(a^{\ell}b)^{-1} \, (a^{m}b)\, (a^{\ell}b)   \hspace{3.2cm} \\&
=  b^{-1}\, a^{-\ell}\,a^{m}b\, a^{\ell}b  \\&
=  b^{-1} (a^{m} b)^{a^{\ell}} b\\&
=  b^{-1} a^{m-2\ell} b\,b\\&
=  a^{2\ell-m}\,b.\\
 \end{align*}

Thus, if $m$ is even, then the conjugacy class of $a^{m}b$ is $\big\{a^{r}b\big| \,r\,\mbox{is even and} \,\,\textcolor{black}{0\leq r< 2^{n-1}}\big\}.$ In particular,  (\ref{conjugacyclassesinQn:2}) holds.\,If $m$ is odd, the conjugacy class of $a^{m}b$ is
$\big\{a^{r}b\big| \,r\,\mbox{is odd and} \,\,\textcolor{black}{0< r< 2^{n-1}}\big\}$.
 In particular,  (\ref{conjugacyclassesinQn:1}) holds. 

For (\ref{conjugacyclassesinQn:3}), we have 
\begin{align*}
(a^{m})^{a^{\ell}} & =a^{-\ell} a^{m} a^{\ell}   \\&
=  a^{m}.&
 \end{align*}
\begin{align*}
(a^{m})^{a^{\ell}b} & =(a^{\ell}b)^{-1} (a^{m}) (a^{\ell}b)   \hspace{3.20cm} \\&
=  b^{-1}\, a^{-\ell}\,a^{m} a^{\ell}b  \\&
= b^{-1}\,\,a^{m}b  \\&
= a^{-m}.
 \end{align*}
This completes the proof of (\ref{conjugacyclassesinQn:3}).

 For (\ref{conjugacyclassesinQn:4}) and (\ref{conjugacyclassesinQn:5}), we have similar arguments as in the proofs of Lemma \ref{conjugacyclassesinSDn} (\ref{conjugacyclassesinSDn:4}) and (\ref{conjugacyclassesinSDn:5}).
\end{proof} 
\end{lemma}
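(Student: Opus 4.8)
The plan is to exploit that $\langle a\rangle$ has index $2$ in $G=Q_n$, so every element lies in $\langle a\rangle$ or in the coset $\langle a\rangle b$, and a complete set of conjugating elements is $\{a^{\ell}\mid \ell\in\mathbb{Z}\}\cup\{a^{\ell}b\mid \ell\in\mathbb{Z}\}$. The only facts I need are those recorded in Remark \ref{RemarkQn}, namely $b^{-1}a^{m}b=a^{-m}$ (equivalently $ba^{m}=a^{-m}b$) and $b^{4}=1$, together with $b^{2}=a^{2^{n-2}}$, which is central. Since conjugacy is determined by the element, not by the chosen representative, it suffices to conjugate by these two families.

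For (i) and (ii) I would conjugate a general $a^{m}b$ by $a^{\ell}$ and by $a^{\ell}b$. Using $ba^{\ell}=a^{-\ell}b$, the first computation collapses to $(a^{m}b)^{a^{\ell}}=a^{m-2\ell}b$, and feeding this back through one further conjugation by $b$ gives $(a^{m}b)^{a^{\ell}b}=a^{2\ell-m}b$. In both cases the exponent of $a$ changes only by an even amount, so its parity is a conjugacy invariant; conversely, letting $\ell$ range over $\mathbb{Z}$ realises every $a^{r}b$ with $r\equiv m\pmod 2$. Taking $m=2$ then yields (i) and $m=1$ yields (ii). Part (iii) is immediate: $a^{m}$ commutes with every $a^{\ell}$, while conjugation by $a^{\ell}b$ reduces via $b^{-1}a^{m}b=a^{-m}$ to $a^{-m}$, so the class of $a^{m}$ is exactly $\{a^{m},a^{-m}\}$.

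For (iv) and (v) I would argue by cosets rather than recompute. By Lemma \ref{normalsubsinSDN} the set $\langle a^{2}\rangle\langle b\rangle$ is a proper normal subgroup of $G$, and since $2^{n-2}$ is even its elements are precisely the $a^{\mathrm{even}}$ and the $a^{\mathrm{even}}b$. An element $a^{m}b$ never lies in $\langle a\rangle$, so neither does its inverse, whence $(a^{m}b)^{-1}=a^{m'}b$ for some $m'$. If $m$ is even then $a^{m}b\in\langle a^{2}\rangle\langle b\rangle$, so its inverse lies there too and $m'$ is even; if $m$ is odd then $a^{m}b\notin\langle a^{2}\rangle\langle b\rangle$, so its inverse is outside as well and $m'$ is odd.

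I do not expect a genuine obstacle here; the computations are actually cleaner than in $SD_{n}$ because conjugation by $b$ simply inverts $a$. The one point requiring care, and the main place a direct transcription of the semidihedral argument would fail, is that $b^{2}=a^{2^{n-2}}\neq 1$: in the conjugation $(a^{m}b)^{a^{\ell}b}$ one must keep the surviving factor $b^{2}$ in play rather than assuming $b$ is an involution. Because $b^{2}$ is central and has an even $a$-exponent, it does not disturb the parity bookkeeping, which is why the final answer still has the clean form $a^{2\ell-m}b$.
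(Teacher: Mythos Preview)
Your proposal is correct and follows essentially the same approach as the paper: you compute $(a^{m}b)^{a^{\ell}}=a^{m-2\ell}b$ and $(a^{m}b)^{a^{\ell}b}=a^{2\ell-m}b$ to settle (i)--(ii), handle (iii) by the same direct conjugation, and dispatch (iv)--(v) via membership in the normal subgroup $\langle a^{2}\rangle\langle b\rangle$, exactly as the paper does by reference to the $SD_{n}$ argument. Your use of the relation $ba^{k}=a^{-k}b$ is in fact a bit cleaner than the paper's insertion of $b^{2}$ and $b^{4}$ factors, but the substance is identical.
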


	\begin{lemma}
\label{normallygeneratinfsetinSDn}
Let $G=SD_n$ for some $n\geq 4$ or $G=Q_n$ for some $n\geq 3,$ and let $(a,b)$ be a standard generator pair of $G.$ Let $S$ be a normally generating subset of $G.$ Then there are elements $x,y\in S$ such that $x=a^{\ell}b$ for some $0\leq\ell< 2^{n-1},$ and such that $y=a^{m}$ for some odd $0<m<2^{n-1}$ or $y=a^{m}b$ for some $0\leq m\textcolor{black}{<} 2^{n-1}$ with $\ell \not\equiv m \,\mbox{mod}\,\, 2.$ For each such $x$ and $y,$ we have $\{x,y\}\in \Gamma(G).$

 \begin{proof}
 If $S\subseteq \langle a\rangle,$ then  $\langle\langle S\rangle \rangle \leq \langle a\rangle$ since $\langle a\rangle \trianglelefteq G$ (see Lemma \ref{normalsubsinSDN}), which is a contradiction to $S\in \Gamma(G).$ Thus $S\nsubseteq  \langle a\rangle.$\\
 Let $x\in S\setminus \langle a \rangle.$ Then $x=a^{\ell} b$ for some $0\leq \ell < 2^{n-1}.$ Assume that any element of $S$ has the form $a^{m},$ where $0\leq m< 2^{n-1}$ is even, or $a^{m}b$ with $\ell \equiv m \mbox \,\,\mbox{mod}\,\, 2.$ Then $S\subseteq X_1:=\langle a^{2} \rangle \langle b\rangle$ or $S\subseteq X_2:=\langle a^{2} \rangle \langle ab\rangle.$ Since $X_1$ and $X_2$ are proper normal subgroups of $G$ by Lemma \ref{normalsubsinSDN}, it follows that $\langle\langle S\rangle \rangle \neq G,$ which is a contradiction. Consequently, there is some $y\in S$ such that $y=a^{m},$ where $0<m<2^{n-1}$ is odd, or $y=a^{m}b,$  where $0\leq m< 2^{n-1}$ and $\ell  \not\equiv m\,\, \mbox{mod}\,2.$ \\
  Let  $x,y\in S$ be as above. We show that $X:=\{x,y\}\in \Gamma(G).$ Assume that $y=a^{m}$ for some odd $0<m<2^{n-1}.$ Then $\langle a\rangle = \langle y\rangle\subseteq \langle\langle X\rangle \rangle.$ It follows that $G=\langle a\rangle \langle x\rangle\subseteq \langle\langle X\rangle\rangle.$ Thus $G=\langle\langle X\rangle\rangle,$ and so we have $X\in \Gamma(G).$  Assume now that $y=a^{m}b,$ where $0\leq m< 2^{n-1}$ and $\ell \not\equiv m \,\mbox{mod}\,\, 2.$ Then every element of $S\setminus \langle a\rangle$ is conjugate to $x$ or $y$ by Lemmas \ref{conjugacyclassesinSDn} and \ref{conjugacyclassesinQn}, so $S\setminus \langle a\rangle  \subseteq \langle\langle X\rangle\rangle.$ Also, $a=ab\cdot b\in  \langle\langle X\rangle\rangle$ if $G=SD_n$ and $a=a^{2^{n-2}+1}b\cdot b\in  \langle\langle X\rangle\rangle$ if $G=Q_n.$ It follows that $G= \langle\langle X\rangle\rangle,$ whence $X\in \Gamma(G).$ The proof is now complete.  
   \end{proof}
 \end{lemma}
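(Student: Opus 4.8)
The plan is to split the statement into an existence part (showing that suitable $x$ and $y$ occur in $S$) and a generation part (showing that $\{x,y\}$ normally generates $G$). For the existence of $x$, I would first observe that $\langle a\rangle$ is a proper normal subgroup of $G$ by Lemma \ref{normalsubsinSDN}, so $S\not\subseteq\langle a\rangle$; any $x\in S\setminus\langle a\rangle$ then has the form $a^{\ell}b$ for some $0\le\ell<2^{n-1}$, since $G\setminus\langle a\rangle=\{a^i b\mid 0\le i<2^{n-1}\}$ in both $SD_n$ and $Q_n$. For the existence of $y$, I would argue by contradiction: if every element of $S$ is either $a^m$ with $m$ even or $a^m b$ with $m\equiv\ell\pmod 2$, then $S$ is contained in one of the two maximal subgroups $\langle a^2\rangle\langle b\rangle$ or $\langle a^2\rangle\langle ab\rangle$ (according to the parity of $\ell$), both of which are proper normal subgroups by Lemma \ref{normalsubsinSDN}; this contradicts $S\in\Gamma(G)$. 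Hence some $y\in S$ is either $a^m$ with $m$ odd or $a^m b$ with $m\not\equiv\ell\pmod 2$.

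For the generation part, fix such $x=a^\ell b$ and $y$, and set $X=\{x,y\}$. I would split into the two cases for $y$. If $y=a^m$ with $m$ odd, then $m$ is a unit mod $2^{n-1}$, so $\langle y\rangle=\langle a\rangle\subseteq\langle\langle X\rangle\rangle$; together with $x$ this gives $\langle a,x\rangle=\langle a,b\rangle=G\subseteq\langle\langle X\rangle\rangle$, so $X\in\Gamma(G)$. If $y=a^m b$ with $\ell\not\equiv m\pmod 2$, then one of $x,y$ lies in the "odd" coset and the other in the "even" coset of elements of the form $a^i b$; by the conjugacy-class computations in Lemma \ref{conjugacyclassesinSDn} and Lemma \ref{conjugacyclassesinQn}, every element of $G$ of the form $a^i b$ is conjugate in $G$ to exactly one of $x$ or $y$ (the odd ones to the odd representative, the even ones to the even representative). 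Hence all of $S\setminus\langle a\rangle$ lies in $\langle\langle X\rangle\rangle$. It then remains to recover $a$: in $SD_n$ we have $a=(ab)b$, and since both $ab$ and $b$ are elements of the form $a^i b$ they are conjugate to $x$ or $y$, so $a\in\langle\langle X\rangle\rangle$; in $Q_n$ one uses instead $a=(a^{2^{n-2}+1}b)b$ together with the same observation. Therefore $\langle\langle X\rangle\rangle=G$ and $X\in\Gamma(G)$.

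The one point requiring a little care — and the closest thing to an obstacle — is the claim that in the second case every coset representative $a^i b$ is \emph{conjugate} (not merely congruent mod the relevant subgroup) to $x$ or to $y$; this is exactly what parts (i) and (ii) of Lemmas \ref{conjugacyclassesinSDn} and \ref{conjugacyclassesinQn} deliver, namely that the conjugacy class of $a^2 b$ (respectively $ab$) is precisely the set of all $a^r b$ with $r$ even (respectively odd). Since $x$ and $y$ have opposite parities in their $a$-exponents, one of them is conjugate to $a^2b$ and the other to $ab$, so their conjugates together exhaust $\{a^i b\mid 0\le i<2^{n-1}\}=G\setminus\langle a\rangle$. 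Everything else is bookkeeping: identifying which maximal subgroup $S$ would be forced into, and the two one-line identities expressing $a$ as a product of two elements of $G\setminus\langle a\rangle$.
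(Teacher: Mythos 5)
Your proposal is correct and follows essentially the same route as the paper's proof: ruling out $S\subseteq\langle a\rangle$ and $S\subseteq\langle a^2\rangle\langle b\rangle$ or $\langle a^2\rangle\langle ab\rangle$ via Lemma \ref{normalsubsinSDN}, then using the conjugacy-class descriptions and the identities $a=ab\cdot b$ (for $SD_n$) and $a=a^{2^{n-2}+1}b\cdot b$ (for $Q_n$) to establish normal generation. The only difference is cosmetic: you note explicitly that the conjugates of $x$ and $y$ exhaust all of $G\setminus\langle a\rangle$ (which also justifies why $ab$ and $b$ themselves lie in $\langle\langle X\rangle\rangle$), a point the paper leaves implicit.
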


\begin{lemma}

	\label{UsefulcalculationsSDnn}
	Let $n\geq 4$ be a natural number and $G=SD_n.$ Let $(a,b)$ be a standard generator pair of $G.$
	Let $m_1,m_2\in \mathbb{Z}.$ Then

\begin{enumerate}[(i)]
\item $a^{m_1} b \cdot a^{m_2} b= a^{m_1-m_2+2^{n-2} m_2}.$
\label{UsefulcalculationsSDnn:1}
\item  $a^{m_1} b\cdot a^{m_2}=a^{m_1-m_2+2^{n-2} m_2}\, b.$
\label{UsefulcalculationsSDnn:2}
\item $a^{m_1}\cdot a^{m_2} b=a^{m_1+m_2} b.$
\label{UsefulcalculationsSDnn:3}

\begin{proof}

 For (\ref{UsefulcalculationsSDnn:1}), we have  

\begin{align*}
a^{m_1} b \cdot a^{m_2} b & = a^{m_1} \, (b\, a\, b)^{m_2}  \hspace{4.50cm} \\&
=  a^{m_1} \, (a^{-1+2^{n-2}})^{m_2}  \hspace{4.50cm}\\&
= a^{m_1-m_2+2^{n-2} m_2}.\\&
 \end{align*}

For (\ref{UsefulcalculationsSDnn:2}), we have  

\begin{align*}
a^{m_1} b\cdot a^{m_2} & = a^{m_1} \,b\, a^{m_2}\, b\, b   \hspace{4.60cm}\\&
=   a^{m_1} \,(b\, a\, b)^{m_2}\, b  \hspace{4.60cm}\\&
= a^{m_1} \,(a^{-1+2^{n-2}})^{m_2}\, b  \hspace{4.60cm}\\&
 =a^{m_1-m_2+2^{n-2} m_2}\, b.
 \end{align*}

Statement (\ref{UsefulcalculationsSDnn:3}) is clear. 
\end{proof}

\end{enumerate} 
	
	\end{lemma}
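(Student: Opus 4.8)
The plan is to reduce each product to a power of $a$ times a power of $b$ by moving every occurrence of $b$ to the right, using only the defining relation $bab = a^{-1+2^{n-2}}$ of a standard generator pair together with $b^{2}=1$. The single computational fact I will use repeatedly is that for any $m_{2}\in\mathbb{Z}$,
\[
b\,a^{m_{2}}\,b = (bab)^{m_{2}} = (a^{-1+2^{n-2}})^{m_{2}} = a^{(-1+2^{n-2})m_{2}} = a^{-m_{2}+2^{n-2}m_{2}},
\]
where the last two equalities are just the rule that powers of a fixed element add exponents; alternatively one can quote Remark \ref{easycalculation1}, but the direct exponent computation avoids the parity case distinction.

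Given this, part (\ref{UsefulcalculationsSDnn:1}) follows by writing $a^{m_{1}} b \cdot a^{m_{2}} b = a^{m_{1}}\,(b\,a^{m_{2}}\,b)$ and substituting the displayed identity, which yields $a^{m_{1}}\,a^{-m_{2}+2^{n-2}m_{2}} = a^{m_{1}-m_{2}+2^{n-2}m_{2}}$. For part (\ref{UsefulcalculationsSDnn:2}) I would insert $b^{2}=1$ on the right, so that $a^{m_{1}} b \cdot a^{m_{2}} = a^{m_{1}}\,(b\,a^{m_{2}}\,b)\,b$, and then apply the same identity to get $a^{m_{1}-m_{2}+2^{n-2}m_{2}}\,b$. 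Part (\ref{UsefulcalculationsSDnn:3}) is immediate, since $a^{m_{1}}$ and $a^{m_{2}}$ are both powers of $a$ and hence commute.

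There is no genuine obstacle here; the entire content is the relation $bab = a^{-1+2^{n-2}}$ and the fact that $a$ has order $2^{n-1}$. The only point that deserves a word of care is that all exponent arithmetic for $a$ may be carried out over $\mathbb{Z}$ and reduced modulo $2^{n-1}$ only at the very end — which is precisely why the conclusions are stated with the literal exponent $m_{1}-m_{2}+2^{n-2}m_{2}$ rather than a reduced residue, and why the formulas remain valid for negative $m_{1},m_{2}$.
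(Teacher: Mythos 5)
Your proposal is correct and follows essentially the same route as the paper: both reduce the products via $b\,a^{m_2}\,b=(bab)^{m_2}=(a^{-1+2^{n-2}})^{m_2}$ and then add exponents of $a$ over $\mathbb{Z}$, inserting $b^2=1$ for part (\ref{UsefulcalculationsSDnn:2}). Your remark that the unreduced exponent $m_1-m_2+2^{n-2}m_2$ avoids the parity case split of Remark \ref{easycalculation1} is exactly how the paper's own proof proceeds as well.
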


		\begin{lemma}

	\label{UsefulcalculationsQn}
	Let $n\geq 3$ be a natural number and $G=Q_n.$ Let $(a,b)$ be a standard generator pair of $G.$
	Let $m_1,m_2\in \mathbb{Z}.$ Then

\begin{enumerate}[(i)]
\item $a^{m_1} b \cdot a^{m_2} b=a^{m_1-m_2+2^{n-2}}.$
\label{UsefulcalculationsQn:1}
\item  $a^{m_1} b\cdot a^{m_2}=a^{m_1-m_2}b .$
\label{UsefulcalculationsQn:2}
\item $a^{m_1}\cdot a^{m_2} b=a^{m_1+m_2} b.$
\label{UsefulcalculationsQn:3}

\begin{proof}

 For (\ref{UsefulcalculationsQn:1}), we have  

\begin{align*}
a^{m_1} b \cdot a^{m_2} b & = a^{m_1} b \, b \, b^{-1}\, a^{m_2}\, b   \hspace{4.50cm}\\&
=  a^{m_1} b^{2}\, (b^{-1}\, a\, b)^{m_2} \\&
= a^{m_1} a^{2^{n-2}} a^{-m_2}\\&
=a^{m_1-m_2+2^{n-2}}.
 \end{align*}

For (\ref{UsefulcalculationsQn:2}), we have  

\begin{align*}
a^{m_1} b\cdot a^{m_2} &= a^{m_1} b\, b\, b^{-1} a^{m_2}\, b\,b^{3}   \hspace{4.50cm}\\&
=  a^{m_1} b^{2}\, (b^{-1} a\, b)^{m_2}\,b^{2} \, b \\&
= a^{m_1} a^{2^{n-2}}\, a^{-m_2}\,a^{2^{n-2}} \, b\\&
=a^{m_1-m_2} \, b.
 \end{align*}

Statement (\ref{UsefulcalculationsQn:3}) is clear. 
\end{proof}
\end{enumerate}

\end{lemma}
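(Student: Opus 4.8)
The plan is to treat all three identities as a single normal-form computation in $Q_n$: every element is uniquely of the form $a^i$ or $a^i b$ with $0 \le i < 2^{n-1}$, and parts (i)--(iii) merely record the products $a^{m_1}b\cdot a^{m_2}b$, $a^{m_1}b\cdot a^{m_2}$ and $a^{m_1}\cdot a^{m_2}b$ in this form. The engine throughout is the relation $a^b = a^{-1}$, which I would first package into the commutation rule $b\,a^{k} = a^{-k}b$ for all $k \in \mathbb{Z}$: indeed, Remark \ref{RemarkQn}(i) gives $b^{-1}a^{k}b = a^{-k}$, hence $a^{k}b = b\,a^{-k}$, and replacing $k$ by $-k$ this becomes $b\,a^{k} = a^{-k}b$. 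The second ingredient is the identity $b^{2} = a^{2^{n-2}}$, which is part of the definition of a standard generator pair of $Q_n$.

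With these two facts I would dispatch the parts in order. Part (iii) is immediate, since powers of $a$ commute. For part (ii), pushing the middle $b$ to the right with the commutation rule gives $a^{m_1}b\cdot a^{m_2} = a^{m_1}(b\,a^{m_2}) = a^{m_1}a^{-m_2}b = a^{m_1-m_2}b$; note that $b^{2}$ never enters here, only $a^b = a^{-1}$ is used. For part (i), the same move yields $a^{m_1}b\cdot a^{m_2}b = a^{m_1}(b\,a^{m_2})\,b = a^{m_1}a^{-m_2}b^{2}$, and now substituting $b^{2} = a^{2^{n-2}}$ gives $a^{m_1-m_2}a^{2^{n-2}} = a^{m_1-m_2+2^{n-2}}$, as claimed. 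An equivalent route, matching the style used elsewhere in the paper, is to insert factors $b^{-1}b$ or $b^{3}b$ (legitimate since $b^{4} = 1$ by Remark \ref{RemarkQn}(ii)) so as to expose conjugations $b^{-1}a^{m_2}b$; the outcome is the same.

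I do not anticipate a real obstacle here --- this is a ``multiplication-table'' lemma and its proof is bookkeeping. The only point that deserves care is the contrast with the dihedral (and semidihedral) situation: in $Q_n$ the element $b^{2} = a^{2^{n-2}}$ is a nontrivial central involution, not the identity, so it must be carried explicitly through every manipulation and must never be cancelled or slid past a power of $a$ without using the commutation rule. Since $a$ has order $2^{n-1}$, all exponents may be reduced modulo $2^{n-1}$; but I would also note that the displayed formulas are already valid verbatim as integer exponents, so no reduction is actually forced.
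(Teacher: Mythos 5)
Your proof is correct and follows essentially the same route as the paper: both arguments reduce everything to the relation $b^{-1}a^{m}b=a^{-m}$ together with $b^{2}=a^{2^{n-2}}$ (the paper inserts factors $b\,b^{-1}$ and $b^{3}\,b$ to expose these conjugations, which is exactly the ``equivalent route'' you mention). Your packaging of the relation as the commutation rule $b\,a^{k}=a^{-k}b$ is a minor cosmetic streamlining, not a different argument.
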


	We now prove some properties of modular $p$-groups. From now on until Lemma \ref{Mnp7-1}, we work under the following hypothesis.
	
	\begin{hypothesis}
Let $p$ be a prime number and $n$ be a natural number, where $n\geq 4$ if $p=2$ and $n\geq 3$ if $p$ is odd. Set $G:=M_n(p),$  let $(a,b)$ be a standard generator pair of $G$ and $z:=a^{p^{n-2}}$.
\end{hypothesis}

	\begin{lemma}
	 \label{uniquemaximalofmnp}
	 
$\Phi(G)=Z(G)$ is the unique maximal subgroup of $\langle a\rangle.$ In particular, we have $z\in Z(G).$
\begin{proof}

This follows from \cite[Chapter 5, Theorem 4.3 (i)]{Daniel}.
\end{proof}
 \end{lemma}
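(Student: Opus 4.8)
The plan is to prove the three equalities $\Phi(G)=Z(G)=\langle a^p\rangle$ directly inside $M_n(p)$ and then read off the two asserted consequences, rather than going through the classification. Throughout I write $a^b=b^{-1}ab$, so the defining relation reads $a^b=a^{1+p^{n-2}}$, and I note at the outset that $1+p^{n-2}\equiv 1 \pmod p$, so $a^b$ has order $p^{n-1}$, while $z=a^{p^{n-2}}$ has order exactly $p$ because $n-2<n-1$.

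First I would identify the commutator subgroup $G'$. One computes $[a,b]=a^{-1}a^b=a^{-1}a^{1+p^{n-2}}=a^{p^{n-2}}=z$, so $z\in G'$. Conversely, modulo $\langle z\rangle$ the images of the generators $a$ and $b$ commute (their commutator is $z$), so $G/\langle z\rangle$ is abelian and hence $G'\le\langle z\rangle$. Therefore $G'=\langle z\rangle$, a subgroup of order $p$. Next I would pin down the Frattini subgroup via the identity $\Phi(G)=G'G^p$ valid for the finite $p$-group $G$. Since $n\ge 3$ we have $z=a^{p^{n-2}}\in\langle a^p\rangle$, so in $G/\langle a^p\rangle$ the images of $a$ and $b$ commute and both have order dividing $p$; as $|G/\langle a^p\rangle|=p^n/p^{n-2}=p^2$, this quotient is elementary abelian, giving $\Phi(G)\le\langle a^p\rangle$. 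The reverse inclusion is immediate from $a^p\in G^p\le\Phi(G)$, so $\Phi(G)=\langle a^p\rangle$.

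Then I would compute the centre. We have $\langle a\rangle\le C_G(a)\le G$ with $[G:\langle a\rangle]=p$, and $b\notin C_G(a)$ since $a^b=a^{1+p^{n-2}}\ne a$; hence $C_G(a)=\langle a\rangle$ and $Z(G)\le\langle a\rangle$. For an element $a^i\in\langle a\rangle$, the relation $(a^i)^b=a^{i(1+p^{n-2})}$ shows that $a^i$ is central if and only if $a^{ip^{n-2}}=1$, i.e. if and only if $p\mid i$; thus $Z(G)\le\langle a^p\rangle$. Conversely $(a^p)^b=(a^b)^p=a^{p+p^{n-1}}=a^p$, so $a^p\in Z(G)$ and $\langle a^p\rangle\le Z(G)$. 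Hence $Z(G)=\langle a^p\rangle=\Phi(G)$. Finally, $\langle a\rangle$ is cyclic of order $p^{n-1}$, so its unique maximal subgroup is $\langle a^p\rangle$, which yields the first assertion; and $z=a^{p^{n-2}}\in\langle a^p\rangle=Z(G)$ yields the ``in particular'' claim.

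I do not expect a genuine obstacle: each step is an elementary computation in $M_n(p)$. The only points that demand some care are the order bookkeeping (that $1+p^{n-2}$ is a unit mod $p$, that $z$ has order exactly $p$, and that $|G/\langle a^p\rangle|=p^2$) and the correct use of the structural identity $\Phi(G)=G'G^p$, which is particular to finite $p$-groups.
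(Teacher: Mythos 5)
Your argument is correct, but it takes a genuinely different route from the paper: the paper simply cites \cite[Chapter 5, Theorem 4.3 (i)]{Daniel}, whereas you prove everything from scratch inside $M_n(p)$, identifying all three subgroups with $\langle a^p\rangle$ via $G'=\langle z\rangle$, the identity $\Phi(G)=G'G^p$, and the computation $C_G(a)=\langle a\rangle$ with $(a^i)^b=a^ia^{ip^{n-2}}$. All the steps check out: the commutator $[a,b]=z$, the elementary abelian quotient $G/\langle a^p\rangle$ of order $p^2$, and the criterion $p\mid i$ for $a^i$ to be central are each correct, and the order bookkeeping ($\mathrm{ord}(z)=p$, $n-2\ge 1$, $a^{p^{2n-4}}=1$) goes through under the paper's hypotheses on $n$. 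Two points you leave implicit but should flag: the quotient arguments need $\langle z\rangle$ and $\langle a^p\rangle$ to be normal in $G$, which holds because both are characteristic in the normal (indeed maximal) subgroup $\langle a\rangle$; and you silently use $|G|=p^n$ and $b\notin\langle a\rangle$, which are part of the paper's standing conventions for a standard generator pair rather than consequences you derive from the presentation. What your approach buys is self-containedness and the explicit equality $\Phi(G)=Z(G)=\langle a^p\rangle$, which the rest of the paper tacitly relies on (e.g.\ $z\in Z(G)$ in Lemma \ref{usefuleformnp1}); what the citation buys is brevity and the reassurance that these structural facts are standard.
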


	\begin{lemma}
	\label{usefuleformnp1}

	For each $k \in \mathbb{Z}$, we have $b^{k} a b^{-k}= z^{-k} a$.
	
	\begin{proof}
	We show by induction that, for any non-negative integer $k$, we have $b^{k} a b^{-k}= z^{-k} a$ and $b^{-k} a b^{k}= z^{k} a$. This is clear for $k = 0$. 
	
	We now consider the case $k = 1$. We have $b^{-1}ab = a^{p^{n-2}+1} = za$. Since $z \in Z(G)$ by Lemma \ref{uniquemaximalofmnp}, we also have
	\begin{align*}
		a &= b ( b^{-1} a b)b^{-1} \\&
		=  bzab^{-1} \\&
		= zbab^{-1}
	\end{align*}
	 and thus $bab^{-1}=z^{-1}a.$ 
	
	Assume now that $k \ge 2$ is an integer such that $b^{k-1} a b^{-(k-1)}= z^{-(k-1)} a$ and $b^{-(k-1)} a b^{k-1}= z^{k-1} a$. Bearing in mind that $z \in Z(G)$ by Lemma \ref{uniquemaximalofmnp} and that $bab^{-1}=z^{-1}a$, we see that
	\begin{align*}
		b^{k} a b^{-k}  &= b ( b^{k-1} a b^{-(k-1)})b^{-1} \\&
		=  bz^{-(k-1)}ab^{-1} \\&
		= z^{-(k-1)}bab^{-1}\\&
		=z^{-(k-1)} z^{-1} a \\&
		=z^{-k} a.
	\end{align*}

Similarly, we have 

\begin{align*}
	b^{-k} a b^{k}  &= b^{-1} ( b^{-(k-1)} a b^{k-1})b \\&
	=  b^{-1}z^{k-1}ab \\&
	= z^{k-1}b^{-1}ab\\&
	=z^{k} a.
\end{align*}
\end{proof}

\begin{lemma}
	
	\label{misslemma31au}
	Let $\ell, j \in \mathbb{Z}$. For each positive integer $k,$ we have 
	\begin{equation}
	\label{equationformnp}
(a^{\ell} b^{j})^{k}= a^{k\ell} b^{kj} z^{-k \frac{(k-1)}{2} j\ell} .
\end{equation}
	\begin{proof}

	We prove the lemma by induction over $k$. \\
	For $k=1,$ we have 
	\begin{equation*}
a^{k\ell} b^{kj} z^{-k \frac{(k-1)}{2} j\ell}  = 
 a^{\ell} b^{j}
= (a^{\ell} b^{j})^{k}.
\end{equation*}
	
	Suppose now that $k$ is a positive integer for which (\ref{equationformnp}) holds. Then

	\begin{align*}
(a^{\ell} b^{j})^{k+1}  &= a^{\ell} b^{j} (a^{\ell} b^{j})^{k} \\&
=  a^{\ell} b^{j} a^{k\ell} b^{kj}  z^{-k \frac{(k-1)}{2} j\ell}   \\&
= a^{\ell} b^{j} a^{k\ell} b^{-j}  b^{(k+1)j }  z^{-k \frac{(k-1)}{2} j\ell} \\&
= a^{\ell} (b^{j} a b^{-j})^{k\ell}  b^{(k+1)j }  z^{-k \frac{(k-1)}{2} j\ell} \\&
\stackrel{ \ref{usefuleformnp1}{}}= a^{\ell} (z^{-j}a)^{k\ell} b^{(k+1)j} z^{-k \frac{(k-1)}{2} j\ell}  \\&
\stackrel{ \,\,z\in Z(G)} =a^{(k+1)\ell} b^{(k+1)j} z^{-jk\ell-k \frac{(k-1)}{2} j\ell} \\&
= a^{(k+1)\ell} b^{(k+1)j} z^{(-k- \frac{k(k-1)}{2}) j\ell}\\&
=a^{(k+1)\ell} b^{(k+1)j} z^{(\frac{-(k+1)k}{2}) j\ell}.
 \end{align*}
\end{proof}
\end{lemma}

	\begin{lemma}
	
	\label{orderpn-1inmnp}
	Let $S\in \Gamma(G).$ Then $S$ contains an element of order $p^{n-1}.$ 
	
	\begin{proof}

	 Since $S\not\subseteq \langle a^p,b \rangle = \langle a^p \rangle \langle b \rangle \trianglelefteq G$, the set $S$ contains an element of the form $a^{\ell}b^j$, where $0 < \ell < p^{n-1}$ is not divisible by $p$ and where $0 \le j < p$.
	 By Lemma \ref{misslemma31au}, we have 
	 \begin{equation*}
(a^{\ell} b^{j})^{p}= a^{p\ell} b^{pj} z^{-p \frac{(p-1)}{2} j\ell}=(a^{\ell})^{p} z^{-p \frac{(p-1)}{2}j\ell}.
\end{equation*}
If $p=2,$ then we have   $(a^{\ell} b^{j})^{2}= (a^{\ell})^{2}$ or $(a^{\ell})^{2} z.$ In either case, $(a^{\ell} b^{j})^{2}$ has order $2^{n-2},$ which implies that $a^{\ell} b^{j}$ has order $2^{n-1}.$ If $p$ is odd, then  $(a^{\ell} b^{j})^{p}= (a^{\ell})^{p}$ has order $p^{n-2},$ and so $a^{\ell} b^{j}$ has order $p^{n-1}.$ 
	\end{proof}
	
	\end{lemma}
	
\end{lemma}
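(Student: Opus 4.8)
The plan is to exploit the fact that a normally generating subset of $G$ cannot lie inside any proper normal subgroup, locate an element of $S$ outside a suitable maximal subgroup, and then compute its $p$-th power using the power formula of Lemma \ref{misslemma31au}.

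First I would fix the maximal subgroup to work with. By Lemma \ref{uniquemaximalofmnp} the Frattini subgroup $\Phi(G) = \langle a^p\rangle$ is normal in $G$, so $\langle a^p\rangle\langle b\rangle$ is a subgroup; since $G$ is non-abelian we have $b \notin \langle a\rangle$, so $\langle a^p\rangle \cap \langle b\rangle = 1$ and hence $|\langle a^p\rangle\langle b\rangle| = p^{n-2}\cdot p = p^{n-1}$, making it a proper normal subgroup of $G$. As $S$ normally generates $G$, it follows that $S \not\subseteq \langle a^p\rangle\langle b\rangle$. Writing elements of $G$ in the form $a^\ell b^j$ with $0 \le \ell < p^{n-1}$ and $0 \le j < p$ (possible because $\langle a\rangle \trianglelefteq G$ and $b^p = 1$), membership in $\langle a^p\rangle\langle b\rangle$ amounts to $p \mid \ell$; hence $S$ contains an element $s = a^\ell b^j$ with $p \nmid \ell$.

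Next I would compute $s^p$. Lemma \ref{misslemma31au} gives $s^p = a^{p\ell}\,b^{pj}\,z^{-p\frac{p-1}{2}j\ell}$ with $z = a^{p^{n-2}} \in Z(G)$. Here $b^{pj} = 1$ since $\mathrm{ord}(b) = p$, and $z^p = a^{p^{n-1}} = 1$ since $\mathrm{ord}(a) = p^{n-1}$; thus for $p$ odd the central factor vanishes and $s^p = a^{p\ell}$, whereas for $p = 2$ it reduces to $z^{-j\ell} \in \{1,z\}$ and $s^2 = a^{2\ell}$ or $a^{2\ell + 2^{n-2}}$. Because $p \nmid \ell$ the element $a^\ell$ has order $p^{n-1}$, so $a^{p\ell}$ has order $p^{n-2}$; and in the remaining case $p = 2$ one has $2\ell + 2^{n-2} \equiv 2 \pmod 4$ (as $\ell$ is odd and $4 \mid 2^{n-2}$ by $n \ge 4$), so $\gcd(2\ell + 2^{n-2}, 2^{n-1}) = 2$ and $a^{2\ell + 2^{n-2}}$ again has order $2^{n-2}$. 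In every case $s^p$ has order $p^{n-2}$, so $s^{p^{n-1}} = 1 \ne s^{p^{n-2}}$, and therefore $s$ has order $p^{n-1}$.

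I expect the only delicate point to be the $p = 2$ branch $s^2 = a^{2\ell}z$: one must verify that multiplying $a^{2\ell}$ by the central involution $z = a^{2^{n-2}}$ does not reduce the order below $2^{n-2}$, and this is exactly where the hypothesis $n \ge 4$ is used. Apart from that parity bookkeeping, the proof is a direct application of Lemmas \ref{uniquemaximalofmnp} and \ref{misslemma31au}.
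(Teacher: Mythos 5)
Your proposal is correct and follows essentially the same route as the paper: locate an element $a^{\ell}b^{j}$ of $S$ with $p\nmid\ell$ outside the proper normal subgroup $\langle a^{p}\rangle\langle b\rangle$, compute its $p$-th power via Lemma \ref{misslemma31au}, and deduce that it has order $p^{n-1}$. The only difference is that you spell out details the paper leaves implicit, namely why $\langle a^{p}\rangle\langle b\rangle$ is a proper normal subgroup and why $a^{2\ell+2^{n-2}}$ still has order $2^{n-2}$ in the $p=2$ case.
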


	\begin{lemma}
	
	\label{lemmaformnpuse1230}
Let $x$ be an element of $G$ with order $p^{n-1}.$ Then there is some $y\in G$ such that $(x,y)$ is a standard generator pair of $G.$ 	
	\begin{proof}
	Since $x$ has order $p^{n-1}$ and $G$ has order $p^{n},$ the subgroup $\langle  x\rangle$ is maximal in $G.$ By Lemma \ref{uniquemaximalofmnp}, we have $\langle  z\rangle\subseteq Z(G)=\Phi(G)\subseteq \langle  x\rangle.$ As $\langle  x\rangle$ is cyclic, $\langle  x\rangle$ has only one subgroup of order $p.$ Hence, $\langle  z\rangle$ is the only subgroup of $\langle  x\rangle$ with order $p.$ Therefore, $\langle  b\rangle$ is not contained in $\langle  x\rangle.$ Hence, $\langle  b\rangle$ is a complement of $\langle  x\rangle$ in $G.$ Now, \cite[Theorem 5.3.2]{HansKurzweil} implies that there is some $y\in \langle  b\rangle$ such that $(x,y)$ is a standard generator pair of $G.$ 
\end{proof}
\end{lemma}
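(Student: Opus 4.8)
The plan is to reduce the statement to a standard fact about complements of cyclic maximal subgroups. First I would observe that since $x$ has order $p^{n-1}$ and $|G| = p^n$, the subgroup $\langle x\rangle$ has index $p$ in $G$ and is therefore maximal. The key structural input is Lemma \ref{uniquemaximalofmnp}: $\langle z\rangle \subseteq Z(G) = \Phi(G) \subseteq \langle x\rangle$, since $\Phi(G)$ is contained in every maximal subgroup. Because $\langle x\rangle$ is cyclic of prime-power order, it has a \emph{unique} subgroup of order $p$, and that subgroup must be $\langle z\rangle$.

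Next I would produce a complement to $\langle x\rangle$. I want to show $\langle b\rangle \not\subseteq \langle x\rangle$. If it were contained, then since $\langle b\rangle$ has order $p$ it would equal the unique order-$p$ subgroup $\langle z\rangle$ of $\langle x\rangle$; but $b \notin \langle a\rangle$ by the definition of $M_n(p)$ (as $G$ is non-abelian and $b$ does not centralize $a$, whereas $z = a^{p^{n-2}}$ does), so $\langle b\rangle \neq \langle z\rangle$, a contradiction. Hence $\langle b\rangle \cap \langle x\rangle = 1$, and by order count $G = \langle x\rangle\langle b\rangle$, so $\langle b\rangle$ is a complement of the cyclic maximal subgroup $\langle x\rangle$ in $G$.

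Finally I would invoke the classification/structure result for groups that are a semidirect product of a cyclic group of order $p^{n-1}$ by a cyclic group of order $p$ — concretely \cite[Theorem 5.3.2]{HansKurzweil} — to extract a generator $y \in \langle b\rangle$ of the complement for which the action of $y$ on $x$ by conjugation is $x^y = x^{p^{n-2}+1}$, i.e. $(x,y)$ satisfies the defining relations of a standard generator pair: $\mathrm{ord}(x) = p^{n-1}$, $\mathrm{ord}(y) = p$, $y^{-1}xy = x^{1+p^{n-2}}$. One small point to check is that the cited theorem delivers the nontrivial automorphism of the \emph{correct} type rather than some other power-automorphism of order $p$; this is where the isomorphism type $M_n(p)$ (as opposed to, say, a different split extension) is used to pin down the action, and this identification is the only genuinely nontrivial step — everything else is order-counting and the uniqueness of the order-$p$ subgroup of a cyclic $p$-group.
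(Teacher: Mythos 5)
Your proposal is correct and follows essentially the same route as the paper: maximality of $\langle x\rangle$ by index, the containment $\langle z\rangle\subseteq Z(G)=\Phi(G)\subseteq\langle x\rangle$ from Lemma \ref{uniquemaximalofmnp}, uniqueness of the order-$p$ subgroup of the cyclic group $\langle x\rangle$ to conclude $\langle b\rangle\not\subseteq\langle x\rangle$, and then the appeal to \cite[Theorem 5.3.2]{HansKurzweil} to extract the generator $y$ of the complement. The only difference is that you spell out why $\langle b\rangle\neq\langle z\rangle$ and flag the need to pin down the correct action of $y$ on $x$, both of which the paper leaves implicit.
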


		\begin{lemma}
	\label{standardgeneratorofmnp}
	Let $S\in \Gamma(G).$ Then there is a standard generator pair $(x,y$) of $G$ such that $x\in S$ and $x^{\ell} y^{j}\in S$ for some $0\leq \ell<p^{n-1}$ and some $0< j< p.$

 	\begin{proof}
	
	By Lemma \ref{orderpn-1inmnp}, $S$ contains an element $x$ of order $p^{n-1}.$ By Lemma \ref{lemmaformnpuse1230}, there is an element $y$ of $G$ such that $(x,y)$ is a standard generator pair of $G.$ Since $S\not\subseteq \langle  x\rangle,$ there exist $0\leq \ell < p^{n-1}$ and  $0< j< p$ such that $x^{\ell} y^{j}\in S,$ as required. 
\end{proof}
	
\end{lemma}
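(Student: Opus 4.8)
The plan is to assemble the statement from the two preceding lemmas together with a short counting argument inside the semidirect product structure of $G$. First I would apply Lemma \ref{orderpn-1inmnp} to extract from $S$ an element $x$ of order $p^{n-1}$; this is the only genuinely substantive input, since it already packages (via Lemma \ref{misslemma31au}) the computation showing that any $a^{\ell}b^{j}\in S$ with $\ell$ not divisible by $p$ must itself have maximal order. Having fixed such an $x$, I would then invoke Lemma \ref{lemmaformnpuse1230} to produce an element $y\in G$ for which $(x,y)$ is a standard generator pair of $G$. At this stage the only thing left is to exhibit an element of $S$ of the shape $x^{\ell}y^{j}$ with $0<j<p$.

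For that last step I would argue as follows. Since $\langle x\rangle$ has order $p^{n-1}$, it is a maximal subgroup of the $p$-group $G$, hence normal in $G$; consequently $S\subseteq\langle x\rangle$ would force $G=\langle\langle S\rangle\rangle\subseteq\langle x\rangle$, which is absurd. So I may pick some $s\in S\setminus\langle x\rangle$. Because $\langle x\rangle$ is normal and $y\in G$, the product $\langle x\rangle\langle y\rangle$ is a subgroup of $G$ containing both $x$ and $y$, so $\langle x\rangle\langle y\rangle=G$; comparing orders, $p^{n}=|G|=|\langle x\rangle|\,|\langle y\rangle|/|\langle x\rangle\cap\langle y\rangle|=p^{n-1}\cdot p/|\langle x\rangle\cap\langle y\rangle|$, whence $\langle x\rangle\cap\langle y\rangle=1$. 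Therefore every element of $G$ has a unique expression $x^{\ell}y^{j}$ with $0\le\ell<p^{n-1}$ and $0\le j<p$. Writing $s=x^{\ell}y^{j}$ in this normal form, the condition $s\notin\langle x\rangle$ forces $j\neq 0$, i.e. $0<j<p$, and then $x=x\in S$ together with $x^{\ell}y^{j}=s\in S$ is exactly what is required.

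I do not expect any real obstacle here: the hard work has been discharged in Lemmas \ref{orderpn-1inmnp} and \ref{lemmaformnpuse1230}, and what remains is bookkeeping. The only two points deserving a moment's care are the normality of $\langle x\rangle$ (needed both to rule out $S\subseteq\langle x\rangle$ and to make $\langle x\rangle\langle y\rangle$ a subgroup) and the existence and uniqueness of the $\langle x\rangle$-$\langle y\rangle$ normal form, both of which follow immediately from the order count $|G|=p^{n}=|\langle x\rangle|\cdot|\langle y\rangle|$ and the defining relations of a standard generator pair.
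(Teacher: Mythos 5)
Your proposal is correct and follows essentially the same route as the paper: extract $x\in S$ of order $p^{n-1}$ via Lemma \ref{orderpn-1inmnp}, complete it to a standard generator pair via Lemma \ref{lemmaformnpuse1230}, and then use $S\not\subseteq\langle x\rangle$ to find the second element. The only difference is that you spell out the justification (maximality and normality of $\langle x\rangle$, and the unique $\langle x\rangle$-$\langle y\rangle$ normal form) that the paper leaves implicit.
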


		 Given a group $X$ and an element $h$ of $X$, we write $\mathrm{Conj}_X(h)$ for the conjugacy class of $h$ in $X$.
	 
	 \begin{lemma}
	\label{Mnp7-1} 
	 Let $0\leq \ell < p^{n-1} $ and $0<j<p.$ Then  
	 
	 \begin{enumerate}[(i)]
	
\item  $\mbox{Conj}_{G}(a)=\{ z^{r} a \big| 0\leq r< p\}.$
\label{Mnp7-1:1} 
\item  $\mbox{Conj}_{G}(a^{\ell} b^{j})=\{ z^{r} a^{\ell} b^{j} \big| 0\leq r< p\}.$

\label{Mnp7-1:2} 

\item  $\mbox{Conj}_{G}(a^{-1})=\{ z^{r} a^{-1} \big| 0\leq r< p\}.$

\label{Mnp7-1:3} 

\item  $\mbox{Conj}_{G}((a^{\ell} b^{j})^{-1})=\{ z^{r} a^{-\ell} b^{-j} \big| 0\leq r< p\}.$

\label{Mnp7-1:4} 
\end{enumerate}
	 	 \begin{proof}
	
	(\ref{Mnp7-1:1}) We have $a^{b}=za.$ By induction, we conclude that $a^{{b}^{r}}=z^{r} a$ for all $r\geq 0.$ It follows that \[\mbox{Conj}_{G}(a)= \{ {a^{b}}^{r} \big| 0\leq r< p\} =\{ z^{r} a \big| 0\leq r< p\}.\]
	
	(\ref{Mnp7-1:2}) We have $b^{a}=a^{-1} b a=a^{-1} b a b^{-1} b=a^{-1} a^{b^{p-1}} b=a^{-1}z^{p-1}ab = a^{-1} z^{-1} ab=z^{-1} b.$ By induction, we conclude that $b^{{a}^{s}}=z^{-s} b$ for all $s \ge 0$. It follows that 
	\begin{align*}
\mbox{Conj}_{G}(a^{\ell} b^{j})=\{ (a^{\ell} b^{j})^{a^{s} b^{r}} \big| 0\leq s< p^{n-1}, 0\leq r<p\}  \\
\hspace{4cm}\,\,\,\,\,\,\,\,\,\,=  \{ (a^{{\ell})^{b^{r}}}  (b^{j})^{a^{s} b^{r} } \big| 0\leq s< p^{n-1}, 0\leq r<p\}  \\
=\{ (z^{r} a)^{\ell} (z^{-s} b)^{j} \big| 0\leq s< p^{n-1}, 0\leq r<p\} \\
= \{ z^{r\ell-sj} a^{\ell} b^{j} \big| 0\leq s< p^{n-1}, 0\leq r<p\} \\
= \{ z^{r} a^{\ell} b^{j} \big|  0\leq r<p\}. \,\,\,\,\,\,\,\,\,\,\,\,\,\,\,\,\,\,\,\,\,\,\,\,\,\,\,\,\,\,\,\,\,\,\,\,\,\,\,\,\,\,\,\\
 \end{align*}
	(\ref{Mnp7-1:3}) We have 
\begin{align*}
\mbox{Conj}_{G}(a^{-1})=\{ (a^{-1})^{g} \big|g\in G\}  \\
=\{ (a^{g})^{-1} \big|g\in G\} \\
\stackrel{\mathrm{(\ref{Mnp7-1:1}){}}}=\{ (z^{r} a)^{-1} \big| 0\leq r <p\} \\
= \{ z^{r} a^{-1}  \big|  0\leq r<p\}.\\
 \end{align*}
(\ref{Mnp7-1:4}) We have  \[z^{-j\ell} a^{-\ell} b^{-j} a^{\ell} b^{j}= z^{-j\ell} a^{-\ell} (z^{j} a)^{\ell}= z^{-j\ell} a^{-\ell} z^{j\ell} a^{\ell}=1\]
	
	and hence \[  (a^{\ell} {b^{j})}^{-1} = z^{-j\ell} a^{-\ell} b^{-j}.\]

From (\ref{Mnp7-1:2}), we now see that 
\begin{align*}
\mbox{Conj}_{G}((a^{\ell} b^{j})^{-1})=\{ z^{-r} (a^{\ell} b^{j})^{-1} \big|  0\leq r<p\}  \\
\hspace{4cm}\,\,\,\,\,\,\,\,\,\,=  \{ z^{r} a^{-\ell} b^{-j} \big|  0\leq r<p\}.  \\
 \end{align*}
\end{proof}

\end{lemma}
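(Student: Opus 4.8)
The plan is to establish the four conjugacy class descriptions in Lemma \ref{Mnp7-1} by direct computation, using the structural facts already available under the standing Hypothesis: namely that $z = a^{p^{n-2}} \in Z(G)$ (Lemma \ref{uniquemaximalofmnp}), the defining relation $a^b = a^{1+p^{n-2}} = za$, and the order information $\mathrm{ord}(a) = p^{n-1}$, $\mathrm{ord}(b) = p$. Since every element of $G$ can be written as $a^s b^r$ with $0 \le s < p^{n-1}$, $0 \le r < p$, conjugacy classes are obtained by conjugating by all such $a^s b^r$, and because $\langle a \rangle$ is abelian and normal, conjugation by $a^s$ fixes powers of $a$; the real content is conjugation by $b$.

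For part (\ref{Mnp7-1:1}), I would first prove by induction on $r$ that $a^{b^r} = z^r a$: the base case is the defining relation, and the inductive step uses $z \in Z(G)$ to pull the central factor through. Then $\mathrm{Conj}_G(a) = \{a^{a^s b^r}\} = \{a^{b^r}\} = \{z^r a \mid 0 \le r < p\}$, where one notes that $z$ has order $p$ so these $p$ elements are distinct. For part (\ref{Mnp7-1:2}), the key preliminary is the formula $b^{a^s} = z^{-s} b$, which I would again get by induction on $s$ from the $s=1$ case $b^a = a^{-1} b a = a^{-1}(a^b)^{\,?}\cdots$; concretely $a^{-1} b a b^{-1} = a^{-1} \cdot (b a b^{-1}) = a^{-1} \cdot a^{b^{p-1}}\cdot(\text{reindex}) = z^{-1}$, giving $b^a = z^{-1} b$. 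Then for $x = a^\ell b^j$ one computes $x^{a^s b^r} = (a^\ell)^{b^r}(b^j)^{a^s b^r} = (z^r a)^\ell (z^{-s} b)^j = z^{r\ell - sj} a^\ell b^j$; as $(r,s)$ range over $\mathbb{Z}/p \times \mathbb{Z}/p^{n-1}$ the exponent $r\ell - sj$ hits all residues mod $p$ (since $j$ is invertible mod $p$, already varying $s$ suffices), so the class is $\{z^r a^\ell b^j \mid 0 \le r < p\}$.

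Parts (\ref{Mnp7-1:3}) and (\ref{Mnp7-1:4}) then follow cheaply: conjugacy commutes with inversion, so $\mathrm{Conj}_G(a^{-1}) = \{(g)^{-1} : g \in \mathrm{Conj}_G(a)\} = \{(z^r a)^{-1}\} = \{z^{-r} a^{-1}\} = \{z^r a^{-1} \mid 0 \le r < p\}$, reindexing $r \mapsto -r$ mod $p$. For (\ref{Mnp7-1:4}) I would first pin down the inverse of $a^\ell b^j$ explicitly: the identity $z^{-j\ell} a^{-\ell} b^{-j} \cdot a^\ell b^j = z^{-j\ell} a^{-\ell} (b^{-j} a^\ell b^j) = z^{-j\ell} a^{-\ell}(z^j a)^\ell = 1$ — using the $b^r$-conjugation formula with $r = -j$, or equivalently $b^{-j} a b^j = z^j a$ — shows $(a^\ell b^j)^{-1} = z^{-j\ell} a^{-\ell} b^{-j}$, and then applying (\ref{Mnp7-1:2}) to the element $a^{-\ell} b^{-j}$ (or directly to $(a^\ell b^j)^{-1}$ and absorbing the central power) gives the class $\{z^r a^{-\ell} b^{-j} \mid 0 \le r < p\}$.

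The computation is entirely routine; the only place requiring a little care — the ``main obstacle'', such as it is — is the sign bookkeeping for conjugation by $b$ versus $b^{-1}$ and the reindexing of central exponents modulo $p$ (e.g.\ noticing that $b^{p-1} = b^{-1}$ since $\mathrm{ord}(b) = p$, and that replacing $r$ by $-r$ or $p-r$ is harmless because $\mathrm{ord}(z) = p$). One should also double-check that $z$ genuinely has order exactly $p$ (it does, being the generator of the unique subgroup of order $p$ in $\langle a \rangle$, per Lemma \ref{uniquemaximalofmnp}) so that the sets $\{z^r \cdot (\text{fixed element}) \mid 0 \le r < p\}$ really do have $p$ distinct elements and no collapsing occurs.
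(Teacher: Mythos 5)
Your proposal is correct and follows essentially the same route as the paper: the inductive formulas $a^{b^r}=z^ra$ and $b^{a^s}=z^{-s}b$, the computation $(a^\ell b^j)^{a^sb^r}=z^{r\ell-sj}a^\ell b^j$ with surjectivity of the central exponent mod $p$ coming from $j$ being invertible, inversion-compatibility of conjugacy for (iii), and the explicit inverse $(a^\ell b^j)^{-1}=z^{-j\ell}a^{-\ell}b^{-j}$ fed into (ii) for (iv). Your added remarks on $\mathrm{ord}(z)=p$ and the sign bookkeeping are sound but not needed beyond what the paper records.
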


	\begin{lemma}
		 \label{delta2SDn=deltaSDn}
Let $G=SD_n$ for some $n\geq 4$ or $G=Q_n$ for some $n\geq 3$ or $G=M_n(2)$ for some $n\geq 4$ or  $G=M_n(p)$ for some $n\geq 3$ and some odd prime $p$. Then $\Delta_2(G)=\Delta(G).$ 
 \begin{proof}
Since $\Gamma_2(G)\subseteq \Gamma(G)$, we have $\Delta_2(G)\leq\Delta(G).$\,Now, we want to show that $\Delta(G)\leq \Delta_2(G).$ Let $S\in \Gamma(G).$ We see from Lemmas \ref{normallygeneratinfsetinSDn} and \ref{standardgeneratorofmnp} that there is a subset $T$ of $S$ such that $T\in \Gamma_2(G).$  Since $\| G\|_{S}\leq \| G\|_{T},$ we have   
\begin{align*}
 \Delta(G) &= \sup\{ {\| G\|_{S}}|\ S\in \Gamma(G) \}\\
      &\leq  \sup\{ {\| G\|_{T}}|\ T\in \Gamma_2(G) \}\\
      &= \Delta_2(G).    
\end{align*}
\end{proof}
 \end{lemma}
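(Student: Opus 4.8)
The plan is to prove that $\Delta(G)\le\Delta_2(G)$, since the reverse inequality $\Delta_2(G)\le\Delta(G)$ is immediate from $\Gamma_2(G)\subseteq\Gamma(G)$. The strategy rests on a monotonicity observation about word norms: if $T\subseteq S$ and both normally generate $G$, then every element of $\mathrm{Conj}_G(S^{\pm1})$ that is actually ``used'' by a shortest word is available when we work with $S$, so $\|g\|_S\le\|g\|_T$ for all $g\in G$, hence $\|G\|_S\le\|G\|_T$. Thus it suffices to show that every $S\in\Gamma(G)$ contains a subset $T$ with $|T|\le 2$ that still normally generates $G$.

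The key step is therefore to extract such a two-element normally generating subset $T$ from an arbitrary $S\in\Gamma(G)$, and this is exactly what the preceding lemmas deliver in each of the four families. If $G=SD_n$ or $G=Q_n$, Lemma \ref{normallygeneratinfsetinSDn} (applied to a fixed standard generator pair $(a,b)$ of $G$) produces elements $x,y\in S$ with the stated forms and asserts $\{x,y\}\in\Gamma(G)$; take $T=\{x,y\}$. If $G=M_n(p)$ (for $p=2$ with $n\ge 4$, or $p$ odd with $n\ge 3$), Lemma \ref{standardgeneratorofmnp} produces a standard generator pair $(x,y)$ with $x\in S$ and $x^{\ell}y^{j}\in S$ for suitable $\ell,j$; one then sets $T=\{x,\,x^{\ell}y^{j}\}\subseteq S$ and checks $T\in\Gamma_2(G)$. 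The latter check is routine: $\langle\langle T\rangle\rangle$ contains $x$, hence $\langle x\rangle=\langle a\rangle$ (the cyclic maximal subgroup), hence also contains $x^{\ell}y^{j}$ and therefore $y^{j}$, and since $0<j<p$ we recover $y$, so $\langle\langle T\rangle\rangle\supseteq\langle x,y\rangle=G$.

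With $T$ in hand, the conclusion follows formally: $\|G\|_S\le\|G\|_T$ because $T\subseteq S$, and $T\in\Gamma_2(G)$, so
\begin{align*}
\Delta(G) &= \sup\{\|G\|_S \mid S\in\Gamma(G)\} \\
          &\le \sup\{\|G\|_T \mid T\in\Gamma_2(G)\} \\
          &= \Delta_2(G),
\end{align*}
which together with $\Delta_2(G)\le\Delta(G)$ gives the claimed equality.

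The main obstacle is essentially bookkeeping rather than any deep difficulty: one must verify the monotonicity inequality $\|g\|_S\le\|g\|_T$ carefully (it uses that $\mathrm{Conj}_G(T^{\pm1})\subseteq\mathrm{Conj}_G(S^{\pm1})$, which is clear from the definition in \eqref{definitionofconjG}), and one must make sure that in the modular case the two-element set $T$ genuinely lies inside $S$ and genuinely normally generates $G$ — the only subtlety there being the passage from $y^j$ back to $y$, which is valid precisely because $\gcd(j,p)=1$. Since all the hard structural work (existence of the right pair of generators inside any normally generating set) has already been done in Lemmas \ref{normalsubsinSDN}, \ref{normallygeneratinfsetinSDn}, \ref{orderpn-1inmnp}, \ref{lemmaformnpuse1230} and \ref{standardgeneratorofmnp}, nothing here should require a new calculation.
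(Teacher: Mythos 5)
Your proposal is correct and follows essentially the same route as the paper: both directions are handled identically, with $\Gamma_2(G)\subseteq\Gamma(G)$ giving $\Delta_2(G)\le\Delta(G)$, and the extraction of a two-element normally generating subset $T\subseteq S$ (via Lemmas \ref{normallygeneratinfsetinSDn} and \ref{standardgeneratorofmnp}) together with the monotonicity $\|G\|_S\le\|G\|_T$ giving the reverse inequality. Your explicit verification that $T=\{x,\,x^{\ell}y^{j}\}$ normally generates $G$ in the modular case, using $\gcd(j,p)=1$ to recover $y$ from $y^{j}$, is a detail the paper leaves implicit but is entirely correct.
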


		\section{Proof of Theorem \ref{MainResult1}} 	\label{sectiontoprovesemidihedra}

	\begin{proof}[Proof of Theorem \ref{MainResult1}]  Let $n\geq 4$ be a natural number$,G:=SD_n$  and $(a,b)$ be a standard generator pair of $G.$  Let $0< \hat{o_1}, \hat{o_2} <2^{n-1}$ be fixed and odd, and  $0\leq v_1 < 2^{n-1}$ be fixed and even. Set 
	
	\[ S_1:=\big\{ a^{v_1}b ,a^{\hat{o_1}} \big\},\]
\[ S_2:=\big\{ a^{\hat{o_2}}b ,a^{\hat{o_1}}\big\},\]
\[ S_3:=\big\{a^{\hat{o_2}} b, a^{v_1}b \big\}.\]

	  From Lemmas \ref{normallygeneratinfsetinSDn} and \ref{delta2SDn=deltaSDn}, we see that we only need to consider  $\|G\|_{S_j},$ where $1\leq j\leq 3,$ in order to determine $\Delta(G).$ In the sense of (\ref{definitionofconjG}) and in view of  Lemma \ref{conjugacyclassesinSDn}, we have

	\[ C_1:=\mbox{Conj}_{G}(S_1\textcolor{black}{^{\pm 1}})=\big\{ a^{v }b \big| 0\leq v  < 2^{n-1}  \mbox{ is even} \big\} \cup\big\{ a^{\pm  \hat{o}_1}, (a^{-1+2^{n-2}})^{\pm \hat{o}_1} \big\},\]
\[ C_2:=\mbox{Conj}_{G}(S_2\textcolor{black}{^{\pm 1}})=\big\{ a^{\hat{o}}b \big| 0< \hat{o} < 2^{n-1}  \mbox{ is odd}\big\} \cup \big\{ a^{\pm  \hat{o}_1}, (a^{-1+2^{n-2}})^{\pm \hat{o}_1} \big\} ,\]
\[ C_3:=\mbox{Conj}_{G}(S_3\textcolor{black}{^{\pm 1}})=\big\{ a^{\ell}b \big| 0\leq \ell < 2^{n-1}  \big\}. \]

  For the reader's convenience and to make the proof easy to follow, we set the following:

\[ \textcolor{black}{\hat{C_1}}:= G\setminus C_1= \big\{ a^{\hat{o} }b \big| 0< \hat{o} < 2^{n-1}  \mbox{ is odd} \big\} \cup \langle a \rangle \setminus  \big\{ a^{\pm \hat{o}_1}, (a^{-1+2^{n-2}})^{\pm \hat{o}_1} \big\} ,\]
\[ \textcolor{black}{\hat{C_2}}:= G\setminus C_2= \big\{ a^{v }b  \big| 0\leq v \textcolor{black}{<}    2^{n-1}  \mbox{ is even} \big\}\cup  \langle a \rangle \setminus  \big\{ a^{\pm \hat{o}_1}, (a^{-1+2^{n-2}})^{\pm \hat{o}_1} \big\} ,\] 
\[ \textcolor{black}{\hat{C_3}}:= G\setminus C_3=  \langle a \rangle.\]

	Next,  we study $\|G\|_{S_j},$  where $1\leq j\leq 3.$

(i) We show 
 \[||G||_{S_1}=
 \left\{
  \begin{array}{@{}ll@{}}
  2 \,\,\,\mathrm{if}\, \,n=4,\,\\
  
   3   \,\,\,\mathrm{if}\, \,n\geq 5.\,\,\,\,      \\
             \end{array}
 \right. \]

	  For every $g\in C_1,$ we have  
\[||g||_{S_1}=1.\]
	   Now, suppose that $ g\in  \hat{C}_1\setminus{\{1\}}.$ Then either $g$ is
\[a^{\hat{o} }b= a^{\hat{o_1}} \cdot a^{v }b \in B_{S_{1}}(2),  \]
where $0< \hat{o} < 2^{n-1}$ is odd and $v=\hat{o}-\hat{o}_1,$

or \[g\in \langle a \rangle \setminus  \big\{ a^{\pm \hat{o}_1}, (a^{-1+2^{n-2}})^{\pm \hat{o}_1} \big\}.\] 

If the former holds, then $||g||_{S_1}=2.$ Assume now $g\in \langle a \rangle \setminus  \big\{ a^{\pm \hat{o}_1}, (a^{-1+2^{n-2}})^{\pm \hat{o}_1} \big\}.$ 

Case 1: $n=4.$

It is easy to see that $\big\{ a^{\pm \hat{o}_1}, (a^{-1+2^{n-2}})^{\pm \hat{o}_1} \big\}=\big\{a, a^{3}, a^{5}, a^{7}\big\}.$ Hence, $g$ can be written as

\[a^{v }= a^{v} b \cdot b \in B_{S_1}(2) ,  \]
where $0< v< 8$ is even. Thus, $||g||_{S_1}=2.$	
	
	Case 2: $n\geq 5.$
	
Let $0< \ell < 2^{n-1}$ such that $g=a^{\ell}.$ If $\ell$ is even, then 	
\[ a^{\ell}=a^{\ell}b\cdot b\in  B_{S_1}(2)\]
and hence $||g||_{S_1}=2.$ Assume now that $\ell$ is odd. One can see from Lemma \ref{UsefulcalculationsSDnn} that $||a^{\ell}||_{S_{1}}\neq 2.$ But we have 
\[
a^{\ell}= a^{\ell }b\cdot b \in B_{S_1}(2)\cdot B_{S_1}(1)= B_{S_1}(3)
\]
by Lemma \ref{lemma2.3} and hence $||g||_{S_1}=3.$ \\

Now, let $g=1,$ then by convention $\{1\} = B_{S_{1}}(0)$ and thus $||g||_{S_1}=0.$ Hence,  $||G||_{S_1}=2$ when $n=4$ and  $||G||_{S_1}=3$ when $n\geq 5.$\\

	  (ii)
	  We show 
 \[||G||_{S_2}=
 \left\{
  \begin{array}{@{}ll@{}}
  2 \,\,\,\mathrm{if}\, \,n=4,\,\\
  
   3   \,\,\,\mathrm{if}\, \,n\geq 5.\,\,\,\,      \\
             \end{array}
 \right. \]

	  For every $g\in C_2,$ we have  
\[||g||_{S_2}=1.\]
	   Now, suppose that $ g\in  \hat{C}_2\setminus{\{1\}}.$ Then either $g$ is
\[a^{v }b= a^{\hat{o_1}} \cdot a^{\hat{o}}b \in B_{S_{2}}(2),  \]

where $0\leq v < 2^{n-1}$ is even and $\hat{o}=v-\hat{o}_1,$

or \[g\in \langle a \rangle \setminus  \big\{ a^{\pm \hat{o}_1}, (a^{-1+2^{n-2}})^{\pm \hat{o}_1} \big\}.\] 

If the former holds, then $||g||_{S_2}=2.$ Assume now $g\in \langle a \rangle \setminus  \big\{ a^{\pm \hat{o}_1}, (a^{-1+2^{n-2}})^{\pm \hat{o}_1} \big\}.$ 

Case 1: $n=4.$

As in (i) above, we have  $\big\{ a^{\pm \hat{o}_1}, (a^{-1+2^{n-2}})^{\pm \hat{o}_1} \big\}=\big\{a, a^{3}, a^{5}, a^{7}\big\}.$ Hence, $g$ can be written as 

\[a^{v }= a^{\hat{o}} b \cdot  a b  \stackrel{\mathrm{ \,\,\ref{UsefulcalculationsSDnn}(\ref{UsefulcalculationsSDnn:1})
}}{=} a^{\hat{o}-1+2^{n-2} }\in B_{S_{2}}(2),   \]
	
	 where $0< v< 8$ is even and $\hat{o}=v+1-2^{n-2}.$ Thus, $||g||_{S_2}=2.$	
	
	Case 2: $n\geq 5.$
	
Let $0< \ell < 2^{n-1}$ such that $g=a^{\ell}.$ If $\ell$ is even, then 	
\[ a^{\ell}=a^{\hat{o}} b \cdot  ab  \stackrel{\mathrm{ \,\,\ref{UsefulcalculationsSDnn}(\ref{UsefulcalculationsSDnn:1})
}}{=} a^{\hat{o}-1+2^{n-2} }\in B_{S_{2}}(2),\]
	 
	 where $\hat{o}=\ell+1-2^{n-1}$ and hence $||g||_{S_2}=2.$ Assume now that $\ell$ is odd. One can see from Lemma \ref{UsefulcalculationsSDnn} that $||a^{\ell}||_{S_{2}}\neq 2.$ But we have

\[
a^{\ell}= a^{\ell }b\cdot b \in B_{S_2}(1)\cdot B_{S_2}(2)= B_{S_2}(3)
\]
by Lemma \ref{lemma2.3} and hence $||g||_{S_2}=3.$ \\

Now, let $g=1,$ then by convention $\{1\} = B_{S_{2}}(0)$ and thus $||g||_{S_2}=0.$ Hence,  $||G||_{S_2}=2$ when $n=4$ and  $||G||_{S_2}=3$ when $n\geq 5.$\\

	  (iii) We show $||G||_{S_3}=2.$

	  For every $g\in C_3,$ we have  
	 
	 \[||g||_{S_3}=1.\]
	 
	   Now, suppose that $ g\in  \hat{C}_3\setminus{\{1\}}.$  Then $g$ can be written as
\[a^{\ell}= a^{\ell}b \cdot  b \in B_{S_{3}}(2),  \]
where $0< \ell < 2^{n-1}.$

	Now, let $g=1,$ then by convention $\{1\} = B_{S_{3}}(0)$ and thus $||g||_{S_3}=0.$ So $||G||_{S_{3}}=2.$

	\vspace{1cm}

	In view of (i)-(iii) and Lemma \ref{normallygeneratinfsetinSDn}, we have $\Delta_2(G)=2$ if $n=4$ and  $\Delta_2(G)=3$ if $n>4.$ So the result follows from that fact that $\Delta_2(G)=\Delta(G)$ (see Lemma \ref{delta2SDn=deltaSDn}).
\end{proof} 
	\section{Proof of Theorem \ref{Result2}} 
\label{sectiontoprovequaternion}

	\begin{proof}[Proof of Theorem \ref{Result2}]  Let $n\geq 3$ be a natural number$,G:=Q_n$  and $(a,b)$ be a standard generator pair of $G.$  Let $0< \hat{o_1}, \hat{o_2} <2^{n-1}$ be fixed and odd, and  $0\leq v_1 \,\textcolor{black}{<} \,2^{n-1}$ be fixed and even. Set 
	
	\[ S_1:=\big\{ a^{v_1}b ,a^{\hat{o_1}} \big\},\]
\[ S_2:=\big\{ a^{\hat{o_2}}b ,a^{\hat{o_1}}\big\},\]
\[ S_3:=\big\{a^{\hat{o_2}} b, a^{v_1}b \big\}.\]

	  From Lemmas \ref{normallygeneratinfsetinSDn} and \ref{delta2SDn=deltaSDn}, we see that we only need to consider  $\|G\|_{S_j},$ where $1\leq j\leq 3,$ in order to determine $\Delta(G).$ In the sense of (\ref{definitionofconjG}) and in view of  Lemma \ref{conjugacyclassesinQn}, we have

	\[ \textcolor{black}{C_1}:=\mbox{Conj}_{G}(S_1\textcolor{black}{^{\pm 1}})=\big\{ a^{v }b \big| 0\leq v \textcolor{black}{ <} \,2^{n-1}  \mbox{ is even} \big\} \cup  \big\{ a^{\pm  \hat{o}_1} \big\}, \]
\[ \textcolor{black}{C_2}:=\mbox{Conj}_{G}(S_2\textcolor{black}{^{\pm 1}})=\big\{ a^{\hat{o}}b \big| 0< \hat{o} < 2^{n-1}  \mbox{ is odd}\big\} \cup \{  a^{\pm  \hat{o}_1}  \},\]
\[ \textcolor{black}{C_3}:=\mbox{Conj}_{G}(S_3\textcolor{black}{^{\pm 1}})=\big\{ a^{\ell}b \big| 0\leq \ell < 2^{n-1}  \big\}. \]

	   For the reader's convenience and to make the proof easy to follow, we set the following:

\[ \textcolor{black}{\hat{C_1}}:= G\setminus C_1= \big\{ a^{\hat{o} }b \big| 0< \hat{o} < 2^{n-1}  \mbox{ is odd\big\}} \cup  \langle a \rangle \setminus  \big\{ a^{\pm \hat{o}_1}\big\}, \]
\[ \textcolor{black}{\hat{C_2}}:= G\setminus C_2= \big\{ a^{v }b \big| 0\leq v \textcolor{black}{<}  2^{n-1}  \mbox{ is even \big\}}\cup \langle a \rangle \setminus  \big\{ a^{\pm \hat{o}_1}\big\}, \]
\[ \textcolor{black}{\hat{C_3}}:= G\setminus C_3= \langle a \rangle.\]

	Next,  we study $\|G\|_{S_j},$  where $1\leq j\leq 3.$

(i) We show 
 \[||G||_{S_1}=
 \left\{
  \begin{array}{@{}ll@{}}
  2 \,\,\,\mathrm{if}\, \,n=3,\,\\
  
   3   \,\,\,\mathrm{if}\, \,n\geq 4.\,\,\,\,      \\
             \end{array}
 \right. \]

	  For every $g\in C_1,$ we have  
\[||g||_{S_1}=1.\]
   Now, suppose that $ g\in  \hat{C}_1\setminus{\{1\}}.$ Then either $g$ is

	\[a^{\hat{o} }b= a^{\hat{o_1}} \cdot a^{v }b \in B_{S_{1}}(2),  \]
where $0< \hat{o} < 2^{n-1}$ is odd and $v=\hat{o} -\hat{o}_1,$

	or \[g\in \langle a \rangle \setminus  \big\{ a^{\pm \hat{o}_1} \big\}.\] 

If the former holds, then $||g||_{S_1}=2.$ Assume now $g\in \langle a \rangle \setminus  \big\{ a^{\pm \hat{o}_1}\big\}.$

	Case 1: $n=3.$

It is easy to see that $\big\{ a^{\pm \hat{o}_1} \big\}=\big\{a, a^{3}\big\}.$ Hence, $g$ can be written as

\[a^{2 }= a \cdot a \in B_{S_1}(2). \]

Thus, $||g||_{S_1}=2.$

		Case 2: $n\geq 4.$
	
Let $0< \ell < 2^{n-1}$ such that $g=a^{\ell}.$ If $\ell$ is even, then 	
\textcolor{black}{\[ a^{\ell} \stackrel{\mathrm{ \,\,\ref{UsefulcalculationsQn}(\ref{UsefulcalculationsQn:1})
}}{=} b \cdot a^{2^{n-2} -\ell} b\in B_{S_{1}}(2)\]}
and hence $||g||_{S_1}=2.$ Assume now that $\ell$ is odd. One can see from Lemma \ref{UsefulcalculationsQn} that $||a^{\ell}||_{S_{1}}\neq 2.$ But we have

\textcolor{black}{\[
a^{\ell}= a^{\ell- 2^{n-2}}b\cdot b \in B_{S_1}(2)\cdot B_{S_1}(1)= B_{S_1}(3)
\]}
by Lemma \ref{lemma2.3} and hence $||g||_{S_1}=3.$ \\

Now, let $g=1,$ then by convention $\{1\} = B_{S_{1}}(0)$ and thus $||g||_{S_1}=0.$ Hence,  $||G||_{S_1}=2$ when $n=3$ and  $||G||_{S_1}=3$ when $n\geq 4.$\\

	 	  (ii)
	  We show 
 \[||G||_{S_2}=
 \left\{
  \begin{array}{@{}ll@{}}
  2 \,\,\,\mathrm{if}\, \,n=3,\,\\
  
   3   \,\,\,\mathrm{if}\, \,n\geq 4.\,\,\,\,      \\
             \end{array}
 \right. \]

	  For every $g\in C_2,$ we have  
\[||g||_{S_2}=1.\]
	   Now, suppose that $ g\in  \hat{C}_2\setminus{\{1\}}.$ Then either $g$ is
\[a^{v }b= a^{\hat{o_1}} \cdot a^{\hat{o}}b \in B_{S_{2}}(2),  \]

where $0\leq v < 2^{n-1}$ is even and $\hat{o}=v-\hat{o}_1,$

or \[g\in \langle a \rangle \setminus  \big\{ a^{\pm \hat{o}_1}\big\}.\] 

If the former holds, then $||g||_{S_2}=2.$ Assume now $g\in \langle a \rangle \setminus  \big\{ a^{\pm \hat{o}_1} \big\}.$ 

Case 1: $n=3.$

As in (i) above, we have  $\big\{ a^{\pm \hat{o}_1} \big\}=\big\{a, a^{3}\big\}.$ Hence, $g$ can be written as 

\[a^{2 }= a\cdot a\in B_{S_{2}}(2).   \]

Thus, $||g||_{S_2}=2.$	
	
	Case 2: $n\geq 4.$
	
Let $0< \ell < 2^{n-1}$ such that $g=a^{\ell}.$ If $\ell$ is even, then 	
\[ a^{\ell}=a^{\hat{o}} b \cdot  ab  \stackrel{\mathrm{ \,\,\ref{UsefulcalculationsQn}(\ref{UsefulcalculationsQn:1})
}}{=} a^{\hat{o}-1+2^{n-2} }\in B_{S_{2}}(2),\]
	 
	 where $\hat{o}=\ell+1-2^{n-2}$ and hence $||g||_{S_2}=2.$ Assume now that $\ell$ is odd. One can see from Lemma \ref{UsefulcalculationsQn} that $||a^{\ell}||_{S_{2}}\neq 2.$ But we have

\[
a^{\ell}= a^{\ell-2^{n-2} } b\cdot b \in \textcolor{black}{B_{S_2}(1)\cdot B_{S_2}(2)= B_{S_2}(3)}
\]
by Lemma \ref{lemma2.3} and hence $||g||_{S_2}=3.$ \\

Now, let $g=1,$ then by convention $\{1\} = B_{S_{2}}(0)$ and thus $||g||_{S_2}=0.$ Hence,  $||G||_{S_2}=2$ when $n=3$ and  $||G||_{S_2}=3$ when $n\geq 4.$\\

	  (iii) We show $||G||_{S_3}=2.$

	  For every $g\in C_3,$ we have  
\[||g||_{S_3}=1.\]
	 
	   Now, suppose that $ g\in  \hat{C}_3\setminus{\{1\}}.$  Then $g$ can be written as

	\[a^{\ell}= a^{\ell-2^{n-2}}b \cdot  b \in B_{S_{3}}(2),  \]

	where $0< \ell < 2^{n-1}.$

	Now, let $g=1,$ then by convention $\{1\} = B_{S_{3}}(0)$ and thus $||g||_{S_3}=0.$ So $||G||_{S_{3}}=2.$

	\vspace{1cm}

	In view of (i)-(iii) and Lemma \ref{normallygeneratinfsetinSDn}, we have $\Delta_2(G)=2$ if $n=3$ and  $\Delta_2(G)=3$ if $n\geq 4.$ So the result follows from that fact that $\Delta_2(G)=\Delta(G)$ (see Lemma \ref{delta2SDn=deltaSDn}).
\end{proof}

\section{\textcolor{black}{ Proof of Theorem \ref{Result3}  }}

To establish Theorem \ref{Result3}, we need to prove a series of lemmas first. In what follows, we work under the following hypothesis. 

\begin{hypothesis}
Let $p$ be a prime number and $n$ be a natural number, where $n\geq 4$ if $p=2$ and $n\geq 3$ if $p$ is odd. Set $G:=M_n(p),$  let $(a,b)$ be a standard generator pair of $G$ and $z:=a^{p^{n-2}}$. Moreover, let $S:=\{a, a^{\ell} b^{j}\}$ for some  $0\leq \ell < p^{n-1} $ and some $0<j<p.$
\end{hypothesis}	

Note that $z \in Z(G)$ by Lemma \ref{uniquemaximalofmnp}. 	
	
		\vspace{0.5cm}
	
	\begin{lemma}
	
	\label{namewithoutdate}
	
	Let $g\in \langle a\rangle.$ Then:
	
		 \begin{enumerate}[(i)]
	
\item If $p=2,$ then $\| g\|_{S}\leq 2^{n-3}.$ 
\label{namewithoutdate:1}

\item If $p$ is odd and $(p,n)\neq (3,3),$ then $\| g\|_{S}\leq \frac{p^{n-2}-1}{2}.$

\label{namewithoutdate:2}
\item If $p=3=n,$ then $\| g\|_{S}\leq 2.$
\label{namewithoutdate:3}

\end{enumerate}

	\begin{proof}
	
	Let $0\leq r < p^{n-1}$ with $g=a^{r}.$ Assume that $r$ is divisible by $p^{n-2}$. Then we have \[g=a^{r}= (a^{{p}^{n-2}})^{\frac{r}{p^{n-2}}}=z^{\left(\frac{r}{p^{n-2}}\right)}.\]
	
	So, with $s:=\frac{r}{p^{n-2}},$ we have $g=z^s=a^{-1}\cdot z^{s} a\in B_S(2)$ by Lemma \ref{Mnp7-1}. Thus $\| g\|_{S}\leq 2$, and hence, (i), (ii) and (iii) hold when $r$ is divisible by $p^{n-2}$. 
	
	We assume now that $r$ is not divisible by $p^{n-2}$, and we treat the cases $p = 2$ and $p$ odd separately.
	
	Case 1: $p=2$.
	
	We may write $r$ in the form $r=m\cdot 2^{n-2}+s,$ where $m\in \{0,1,2\}$ and $-2^{n-3}< s \leq 2^{n-3}.$ Note that $s\neq 0$ by choice of $r.$ Then $g=a^{m\cdot 2^{n-2}+s}=(a^{{2}^{n-2}})^{m} a^{s}=z^{m} a^{s}.$
	
	If $s> 0,$ then $g=z^{m} a\cdot a^{s-1}\in B_S(s)$ by Lemma \ref{Mnp7-1} and hence $\| g\|_{S}\leq s \leq 2^{n-3}.$
	
	If $s<0,$ then $g=z^{m} a^{s}=z^{m} a^{-1}  a^{s+1}=z^{m} a^{-1}  (a^{-1})^{(-s-1)} \in B_S(-s)$ by Lemma \ref{Mnp7-1} and hence $\| g\|_{S}\leq -s < 2^{n-3}.$

	Case 2: $p$ is odd. 
	
	We may write $r$ in the form $r=m\cdot p^{n-2}+s,$ where $m\in \{0,1,\dots,p\}$ and $-\frac{p^{n-2}-1}{2} \leq s \leq \frac{p^{n-2}-1}{2}.$ Note that $s\neq 0$ by choice of $r$. Then $g=a^{m\cdot p^{n-2}+s}=z^{m} a^{s}.$   
	
	If $s> 0,$ then $g=z^{m} a\cdot a^{s-1}\in B_S(s)$ by Lemma \ref{Mnp7-1} and hence $\| g\|_{S}\leq \frac{p^{n-2}-1}{2}.$ In particular, if $(p,n)=(3,3),$ then $\| g\|_{S}=1 < 2.$
	
	If $s< 0,$ then $g=z^{m} a^{s}=z^{m} a^{-1} a^{s+1}=z^{m} a^{-1} (a^{-1})^{(-s-1)}\in B_S(-s)$ by Lemma \ref{Mnp7-1} and hence $\| g\|_{S}\leq -s \leq \frac{p^{n-2}-1}{2}.$ In particular, if $(p,n)=(3,3),$ then $\| g\|_{S}=1 < 2.$
\end{proof}
\end{lemma}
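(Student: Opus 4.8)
The plan is to reduce every $g \in \langle a\rangle$ to the form $a^r$ for the unique $0\le r< p^{n-1}$, and then to write $a^r$ as a short product of conjugates of $a^{\pm 1}$, exploiting the explicit descriptions $\mathrm{Conj}_G(a)=\{z^ra\mid 0\le r<p\}$ and $\mathrm{Conj}_G(a^{-1})=\{z^ra^{-1}\mid 0\le r<p\}$ from Lemma \ref{Mnp7-1}, the fact that $a,a^{-1}\in \mathrm{Conj}_G(S^{\pm1})$, and the submultiplicativity $B_S(n)B_S(m)=B_S(n+m)$ of Lemma \ref{lemma2.3}.

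First I would dispose of the case $p^{n-2}\mid r$: here $g=z^{s}$ with $s=r/p^{n-2}$, and since $z^sa\in\mathrm{Conj}_G(a)$ (note $z^p=a^{p^{n-1}}=1$, so only $s\bmod p$ matters) while $a^{-1}\in S^{\pm1}$, we get $g=(z^sa)\cdot a^{-1}\in B_S(2)$, hence $\|g\|_S\le 2$. This already settles part (iii) in this sub-case and is harmless for (i) and (ii).

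Next, assume $p^{n-2}\nmid r$ and write $r=m\,p^{n-2}+s$ where $s$ is the balanced residue of smallest absolute value: $-2^{n-3}<s\le 2^{n-3}$ when $p=2$ (so $m\in\{0,1,2\}$) and $-\frac{p^{n-2}-1}{2}\le s\le \frac{p^{n-2}-1}{2}$ when $p$ is odd (so $m\in\{0,\dots,p\}$); in either case $s\ne 0$ by the choice of $r$. Then $g=z^m a^s$. If $s>0$, write $g=(z^m a)\cdot a^{s-1}$: the first factor lies in $\mathrm{Conj}_G(a)$ by Lemma \ref{Mnp7-1}, and $a^{s-1}$ is a product of $s-1$ copies of $a\in S$, so by Lemma \ref{lemma2.3} we get $g\in B_S(1)B_S(s-1)=B_S(s)$, i.e. $\|g\|_S\le s$. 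If $s<0$, symmetrically $g=(z^m a^{-1})\cdot(a^{-1})^{-s-1}$ with $z^m a^{-1}\in\mathrm{Conj}_G(a^{-1})$, giving $\|g\|_S\le -s$.

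Finally I would substitute the bounds on $|s|$: for $p=2$ this yields $\|g\|_S\le 2^{n-3}$ (the side $s<0$ actually gives the strictly smaller bound $2^{n-3}-1$); for $p$ odd it gives $\|g\|_S\le\frac{p^{n-2}-1}{2}$; and for $(p,n)=(3,3)$ the only nonzero balanced residue has $|s|=1$, so $\|g\|_S\le 1\le 2$, completing (iii). There is no genuinely hard step: the only points demanding care are the choice of the balanced representative $r=m\,p^{n-2}+s$ (in particular allowing $m$ up to $p$, which is legitimate since $z^p=1$) and tracking whether each inequality on $s$ is strict, so that the stated bounds come out exactly.
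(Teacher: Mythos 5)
Your proposal is correct and follows essentially the same route as the paper's proof: dispose of the case $p^{n-2}\mid r$ via $g=z^{s}\in B_S(2)$, then use the balanced residue decomposition $r=m\,p^{n-2}+s$ to write $g=z^{m}a^{s}$ as a product of $|s|$ conjugates of $a^{\pm 1}$ using Lemma \ref{Mnp7-1} and Lemma \ref{lemma2.3}. The bounds you extract, including the observation that the $(3,3)$ case gives $|s|\le 1$, match the paper's argument exactly.
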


	\begin{lemma}

Let $g\in G\setminus \langle a\rangle.$ Then:
\label{27oct2023}

\begin{enumerate}[(i)]
	\item   If $p=2,$ then  $\| g\|_{S}\leq  2^{n-3}+1$.
	\label{27oct2023:1}
	\item   If $p$ is odd and $(n,p)\neq (3,3),$ then $\| g\|_{S}\leq  \frac{p^{n-2}+p-2}{2}$.
	\label{27oct2023:2}
\end{enumerate}
	
	\begin{proof}
		
		Assume that $p=2.$ Since $g\in  G\setminus \langle a\rangle,$ we have $g=a^{r}b$ for some $0\leq r<2^{n-1}.$ By Lemma \ref{namewithoutdate} (i), we have $a^{r-\ell}\in B_S(2^{n-3}).$ Also, we have $a^{\ell}b\in S\subseteq B_S(1).$ Now, Lemma \ref{lemma2.3} implies that $g=a^{r}b=a^{r-\ell} a^{\ell} b\in B_S(2^{n-3}+1).$ In other words, $\| g\|_{S}\leq  2^{n-3}+1,$ and so (\ref {27oct2023:1}) holds.\\
		Assume now that $p$ is odd and $(n,p) \neq (3,3)$. Since $g\in G \setminus \langle a\rangle,$ we have $g=a^{r}b^{s}$ for some $0\leq r<p^{n-1}$ and some $0<s<p.$ As $0<j<p$ and $\mbox{ord}(b)=p,$ we have $\mbox{ord}(b^j) = p$, and this easily implies that 
		\begin{equation*}
		\langle b \rangle = \langle b^{j}\rangle=\left\{b^{jk} \mid -\frac{p-1}{2}\leq k\leq \frac{p-1}{2}\right\}=\left\{b^{jk} \mid 0\leq k\leq \frac{p-1}{2}\right\} \cup \left\{b^{-jk} \mid 0\leq k\leq \frac{p-1}{2}\right\}. 
		\end{equation*}
		Since $1 \neq b^{s} \in \langle b \rangle$, we thus have $b^{s}=b^{jk}$ or $b^{s}=b^{-jk}$ for some $0<k\leq \frac{p-1}{2}.$ By Lemma \ref{misslemma31au}, we have 
		\begin{equation*}
		(a^{\ell} b^{j})^{k}=a^{k\ell} b^{kj} z^{-\frac{k(k-1)}{2} j\ell}
	    \end{equation*}
    	and
    	\begin{equation*}
    	(a^{-\ell} b^{-j})^{k}=a^{-k\ell} b^{-kj} z^{-\frac{k(k-1)}{2} j\ell}.	
    	\end{equation*}
		Suppose that $b^{s}=b^{jk}.$ Then 
		\begin{equation*}
		(a^{\ell} b^{j})^{k}=a^{k\ell}  z^{-\frac{k(k-1)}{2} j\ell} b^{s}.
		\end{equation*}
		Set $u:=a^{k\ell} z^{-\frac{k(k-1)}{2} j\ell},$ then $(a^{\ell} b^{j})^{k}=ub^{s}.$ We have $g=a^{r}b^{s}=a^{r} u^{-1} u b^{s}=a^{r} u^{-1} (a^{\ell} b^{j})^{k}.$ By Lemma \ref{namewithoutdate} (ii), we have $a^{r} u^{-1} \in B_S(\frac{p^{n-2}-1}{2}).$ Moreover, $(a^{\ell} b^{j})^{k}\in B_S(\frac{p-1}{2})$ since $k\leq\frac{p-1}{2}.$ Applying Lemma \ref{lemma2.3}, we conclude that $g\in B_S(\frac{p^{n-2}-1}{2}+\frac{p-1}{2})=B_S(\frac{p^{n-2}+p-2}{2})$ and hence  $\| g\|_{S}\leq  \frac{p^{n-2}+p-2}{2}.$ This completes the proof of (\ref{27oct2023:2}) for the case that $b^{s}=b^{jk}.$ \\
		Assume now that $b^{s}=b^{-jk}.$ Then 
		\begin{equation*}
		(a^{-\ell} b^{-j})^{k}=a^{-k\ell}  z^{-\frac{k(k-1)}{2} j\ell} b^{s}. 
		\end{equation*}
		With $u:=a^{-k\ell}  z^{-\frac{k(k-1)}{2}j\ell},$ we thus have $(a^{-\ell} b^{-j})^{k}=ub^{s}$ and $g=a^{r} b^{s}=a^{r} u^{-1} u b^{s}=a^{r} u^{-1} (a^{-\ell} b^{-j})^{k}.$ As in the case that $b^{s}=b^{jk},$ we see that  $a^{r} u^{-1} \in B_S(\frac{p^{n-2}-1}{2}),$ and from Lemma \ref{Mnp7-1} (\ref{Mnp7-1:4}), we see that $(a^{-\ell} b^{-j})^k\in B_S(k)\subseteq B_S(\frac{p-1}{2}).$ As in the case $b^{s}=b^{jk}$, it follows that $\| g\|_{S}\leq  \frac{p^{n-2}+p-2}{2}.$ The proof is now  complete. 
	\end{proof}
\end{lemma}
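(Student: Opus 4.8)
The plan is to treat the primes $p=2$ and $p$ odd separately. In both cases, given $g\in G\setminus\langle a\rangle$, I would factor $g$ as an element of $\langle a\rangle$ times a short power of the second generator $a^{\ell}b^{j}$, bound the $\langle a\rangle$-part using Lemma \ref{namewithoutdate}, and then assemble the two pieces with Lemma \ref{lemma2.3}.

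For $p=2$ we have $j=1$, so $g=a^{r}b$ for some $0\leq r<2^{n-1}$. Since $a^{\ell}b\in S\subseteq B_{S}(1)$ and $a^{r-\ell}\in\langle a\rangle$ satisfies $\|a^{r-\ell}\|_{S}\leq 2^{n-3}$ by Lemma \ref{namewithoutdate}(i), the factorization $g=a^{r-\ell}\cdot a^{\ell}b$ together with Lemma \ref{lemma2.3} gives $g\in B_{S}(2^{n-3}+1)$, i.e.\ assertion (i).

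For $p$ odd I would write $g=a^{r}b^{s}$ with $0<s<p$. Because $\gcd(j,p)=1$ we have $\langle b^{j}\rangle=\langle b\rangle$, so $b^{s}$ equals $b^{jk}$ or $b^{-jk}$ for some integer $k$ with $0<k\leq\frac{p-1}{2}$. Using Lemma \ref{misslemma31au} and the centrality of $z$, I would rewrite $(a^{\ell}b^{j})^{k}$, respectively $(a^{-\ell}b^{-j})^{k}$, in the form $w\,b^{s}$ with $w\in\langle a\rangle$, so that $g=(a^{r}w^{-1})\cdot(a^{\pm\ell}b^{\pm j})^{k}$ with $a^{r}w^{-1}\in\langle a\rangle$. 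By Lemma \ref{namewithoutdate}(ii) the first factor lies in $B_{S}(\frac{p^{n-2}-1}{2})$, while the second lies in $B_{S}(k)\subseteq B_{S}(\frac{p-1}{2})$; adding these with Lemma \ref{lemma2.3} yields $\|g\|_{S}\leq\frac{p^{n-2}+p-2}{2}$, which is (ii). The restriction $(n,p)\neq(3,3)$ enters precisely at the appeal to Lemma \ref{namewithoutdate}(ii).

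The only point requiring a little care — more a subtlety than a real obstacle — is the claim that $(a^{-\ell}b^{-j})^{k}$ is a product of at most $k$ conjugates of elements of $S^{\pm1}$: although $a^{-\ell}b^{-j}$ is not literally $(a^{\ell}b^{j})^{-1}$, Lemma \ref{Mnp7-1}(iv) shows it lies in $\mathrm{Conj}_{G}((a^{\ell}b^{j})^{-1})\subseteq\mathrm{Conj}_{G}(S^{\pm1})$, so $(a^{-\ell}b^{-j})^{k}\in B_{S}(k)$. The remaining work is the routine bookkeeping of collecting the central $z$-powers and the $a$-powers produced by Lemma \ref{misslemma31au} into the single factor $w$, so that $a^{r}w^{-1}$ really is a power of $a$ to which Lemma \ref{namewithoutdate} applies.
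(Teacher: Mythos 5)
Your proposal is correct and follows essentially the same route as the paper's proof: the same decomposition $g=(a^{r}u^{-1})\cdot(a^{\pm\ell}b^{\pm j})^{k}$, the same appeals to Lemmas \ref{namewithoutdate}, \ref{misslemma31au}, \ref{Mnp7-1}(iv) and \ref{lemma2.3}, and the same handling of the subtlety that $a^{-\ell}b^{-j}$ is only a conjugate of $(a^{\ell}b^{j})^{-1}$ rather than equal to it. No gaps.
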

	
	\begin{lemma}
	Assume that $n=p=3.$ Let $g\in G\setminus \langle a\rangle.$ Then $\| g\|_{S}\leq 2.$
	\label{30635}
	\begin{proof}
Let $0\leq r<9$ and $0<s<3$ such that $g=a^{r} b^{s}.$ Note that $\langle b^{s}\rangle=\langle b \rangle = \langle b^{j}\rangle$.\\

Case 1: $\ell$ is not divisible by $3.$ \\
	The elements $a^{\ell}, za^{\ell}, z^{2}a^{\ell}, a^{-\ell}, z a^{-\ell}, z^{2} a^{-\ell}$ are easily seen to be mutually distinct, and they are not contained in $\langle z\rangle.$ So it follows that $\langle a\rangle \backslash \langle z\rangle=\{ a^{\ell}, za^{\ell}, z^{2}a^{\ell}, a^{-\ell}, z a^{-\ell}, z^{2} a^{-\ell}\}$. Likewise, one can see that $\langle a\rangle \backslash \langle z\rangle=\{ a, za, z^{2}a, a^{-1}, z a^{-1}, z^{2} a^{-1}\}$. Together with Lemma \ref{Mnp7-1}, it follows that 
	\begin{align*}
	\{ a^{\ell}, za^{\ell}, z^{2}a^{\ell}, a^{-\ell}, z a^{-\ell}, z^{2} a^{-\ell}\}&=\langle a \rangle \setminus \langle z \rangle \\ &= \{ a, za, z^{2}a, a^{-1}, z a^{-1}, z^{2} a^{-1}\}\\ &= \mathrm{Conj}_G(a) \cup \mathrm{Conj}_G(a^{-1}).
	\end{align*}
	 
Case 1.1: $r$ is not divisible by $3, s=j.$ 
	
	Then $a^{r}\notin \langle z\rangle.$ Therefore, by the above observations, we have $a^{r}=z^{m} a^{\ell}$ or $a^{r}=z^{m} a^{-\ell}$ for some $0\leq m<3.$ If the former holds, then $g=z^{m} a^{\ell} b^{j}\in \mbox{Conj}_{G}(a^{\ell} b^{j})$ by Lemma \ref{Mnp7-1} (ii) and hence $\| g\|_{S}=1 < 2.$ If $a^{r}=z^{m} a^{-\ell},$ then $g=z^{m} a^{-\ell} b^{j}=a^{-2\ell} z^{m} a^{\ell} b^{j}.$ Since $a^{-2\ell} \in \langle a\rangle\backslash \langle z\rangle=\mbox{Conj}_{G}(a)\cup  \mbox{Conj}_{G}(a^{-1})$ and $z^{m} a^{\ell} b^{j}\in \mbox{Conj}_{G}(a^{\ell} b^{j})$ by Lemma \ref{Mnp7-1} (ii), it follows that $\| g\|_{S}\leq 2.$\\
	
	Case 1.2: $r$ is not divisible by $3, s\neq j.$ 
	
	Then $b^{s}=b^{-j}.$  Also, $a^{r}=z^{m} a^{\ell}$ or $z^{m} a^{-\ell}$ for some $0\leq m<3.$ If the latter holds, then $g=z^{m} a^{-\ell} b^{-j}\in B_S(1)$ by Lemma \ref{Mnp7-1} (iv) and hence  $\| g\|_{S}=1 < 2.$ If $a^{r}=z^{m} a^{\ell},$ then $g=z^{m} a^{\ell} b^{-j}=a^{2\ell} z^{m} a^{-\ell} b^{-j},$ and since $a^{2\ell} \in \langle a \rangle \setminus \langle z \rangle = \mbox{Conj}_{G}(a)\cup  \mbox{Conj}_{G}(a^{-1})$ and $z^{m} a^{-\ell} b^{-j} \in \mathrm{Conj}_G((a^{\ell}b^j)^{-1})$ by Lemma \ref{Mnp7-1} (iv), it follows that $\| g\|_{S}\leq 2.$\\
	
	Case 1.3: $r$ is divisible by $3.$
	
	Then $g=z^{m} b^{j}$ or $g=z^{m} b^{-j}$ for some $0\leq m<3.$ In the former case, we have $g=a^{-\ell} z^{m} a^{\ell} b^{j},$ and in the latter case, we have  $g=a^{\ell} z^{m} a^{-\ell} b^{-j}.$ We have $a^{\ell}, a^{-\ell} \in \langle a\rangle\backslash \langle z\rangle=\mbox{Conj}_{G}(a)\cup  \mbox{Conj}_{G}(a^{-1})$, and we also have $z^{m} a^{\ell} b^{j}\in  \mbox{Conj}_{G}(a^{\ell} b^{j})$ by Lemma \ref{Mnp7-1} (ii) and $z^{m} a^{-\ell} b^{-j}\in  \mbox{Conj}_{G}((a^{\ell} b^{j})^{-1})$ by Lemma \ref{Mnp7-1} (iv), and so it follows that $\| g\|_{S}\leq 2.$\\

	Case 2: $\ell$ is divisible by $3.$ \\
	Then $a^{\ell}=z^{t}$ for some $0\leq t<3.$ Note that $g=a^{r}b^{j}$ or $g=a^{r} b^{-j}$. Assume that $r$ is divisible by $3$. Then $g = z^mb^j$ or $g = z^m b^{-j}$ for some $0 \leq m < 3$. If the former holds, then $g=z^{m}b^{j}=z^{m-t}z^{t}b^{j}=z^{m-t}a^{\ell}b^{j}\in \mbox{Conj}_{G}(a^{\ell} b^{j})$ by Lemma \ref{Mnp7-1} (ii) and hence $\| g\|_{S} =1 < 2$. Otherwise, if $g = z^m b^{-j}$, then $g=z^{m} b^{-j} =z^{m+t} z^{-t} b^{-j}= z^{m+t} a^{-\ell} b^{-j}\in \mbox{Conj}_{G}((a^{\ell} b^{j})^{-1})$ by Lemma \ref{Mnp7-1} (iv), whence $\| g\|_{S}= 1 < 2.$ Assume now that $r$ is not divisible by $3$. Then $r-1$ or $r+1$ is divisible by $3,$ and thus $a^{r-1}$ or $a^{r+1}$ lies in $\langle z\rangle.$ Hence $a^{r}=z^{m} a$ or $z^{m} a^{-1}$ for some $0\leq m<3.$ If $a^r = z^ma$, then either $g=az^{m}b^{j} = az^{m-t}z^tb^j = az^{m-t}a^{\ell}b^j$ or $g=az^{m}b^{-j} = az^{m+t}z^{-t}b^{-j} = az^{m+t}a^{-\ell}b^{-j}$, and so it follows from Lemma \ref{Mnp7-1} (ii), (iv) that $\| g\|_{S}\leq 2.$ If $a^r = z^ma^{-1}$, then we either have $g=a^{-1} z^{m}b^{j} = a^{-1}z^{m-t}z^tb^j = a^{-1}z^{m-t}a^{\ell}b^j$ or $g = a^{-1} z^{m}b^{-j} = a^{-1}z^{m+t}z^{-t}b^{-j} = a^{-1}z^{m+t}a^{-\ell}b^{-j}$, and again it follows from Lemma \ref{Mnp7-1} (ii), (iv) that $\| g\|_{S}\leq 2.$
	\end{proof} 
\end{lemma}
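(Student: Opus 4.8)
The plan is to write $g = a^{r}b^{s}$ with $0 \le r < 9$ and $0 < s < 3$, and to exhibit $g$ as a product of at most two elements of $\mathrm{Conj}_{G}(S^{\pm 1})$. The workhorse will be Lemma \ref{Mnp7-1}: for $p = 3$ each of $\mathrm{Conj}_{G}(a)$, $\mathrm{Conj}_{G}(a^{-1})$, $\mathrm{Conj}_{G}(a^{\ell}b^{j})$, $\mathrm{Conj}_{G}((a^{\ell}b^{j})^{-1})$ consists of exactly three elements, a fixed representative multiplied by $1, z, z^{2}$. Since $\langle z\rangle = \{1,z,z^{2}\}$ and $|\langle a\rangle| = 9$, I would first record the identity
\[
\langle a\rangle \setminus \langle z\rangle = \mathrm{Conj}_{G}(a)\cup \mathrm{Conj}_{G}(a^{-1}),
\]
which holds for every $\ell$ because the exponent of $a$ is coprime to $3$; concretely these six elements are the powers $a^{i}$ with $3 \nmid i$.

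Next I would split on whether $3 \mid \ell$. Suppose $3 \nmid \ell$. Then $a^{\ell}, a^{-\ell}$ also lie in $\langle a\rangle\setminus\langle z\rangle$, so the identity above may be re-expressed with representatives $a^{\pm\ell}$. Since $b^{s}\in\langle b\rangle = \langle b^{j}\rangle$, we have $b^{s} = b^{j}$ or $b^{s} = b^{-j}$. If $3 \nmid r$ then $a^{r}\notin\langle z\rangle$, so $a^{r} = z^{m}a^{\ell}$ or $z^{m}a^{-\ell}$ for some $m$; matching the $b$-part, $g$ is then either a single conjugate of $(a^{\ell}b^{j})^{\pm1}$, or of the form $a^{\mp 2\ell}\cdot(z^{m}a^{\pm\ell}b^{\pm j})$, a conjugate of $a^{\pm1}$ (note $3\nmid 2\ell$) times a conjugate of $(a^{\ell}b^{j})^{\pm1}$. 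If instead $3 \mid r$, then $a^{r} = z^{m}$ and $g = z^{m}b^{\pm j} = a^{\mp\ell}\cdot(z^{m}a^{\pm\ell}b^{\pm j})$, again a product of two conjugates. In every subcase $\|g\|_{S}\le 2$, using Lemma \ref{Mnp7-1} to identify the conjugacy classes together with $(a^{\ell}b^{j})^{-1} = z^{-j\ell}a^{-\ell}b^{-j}$.

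Now suppose $3 \mid \ell$, so $a^{\ell} = z^{t}$ for some $0 \le t < 3$ and (again) $b^{s} = b^{\pm j}$. If $3 \mid r$, then $a^{r} = z^{m}$ and $g = z^{m}b^{\pm j} = z^{m\mp t}\,z^{\pm t}b^{\pm j} = z^{m\mp t}a^{\pm\ell}b^{\pm j}$, which by Lemma \ref{Mnp7-1}\,(ii),(iv) is a single conjugate of $(a^{\ell}b^{j})^{\pm 1}$. If $3 \nmid r$, then exactly one of $r-1, r+1$ is divisible by $3$, so $a^{r} = z^{m}a$ or $z^{m}a^{-1}$; absorbing $z^{m}$ into the central factor as before gives $g = a^{\pm 1}\cdot z^{m'}a^{\pm\ell}b^{\pm j}$, a conjugate of $a^{\pm1}$ times a conjugate of $(a^{\ell}b^{j})^{\pm 1}$, hence of length $\le 2$.

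I expect the real point — and the reason $n = p = 3$ is excluded from Lemma \ref{27oct2023} — to be that here $p^{n-2} = 3$ is small enough that a nontrivial power of the central element $z$ can itself cost two letters (Lemma \ref{namewithoutdate}\,(iii)), so one cannot simply bound the $\langle a\rangle$-part of $g$ and then add one more letter for the $b$-part. One must split $g$ directly into two conjugates, and the delicate subcase is $3 \mid \ell$, where $a^{\ell}b^{j}$ is conjugate to $z^{t}b^{j}$ and on its own reaches only $\langle z,b\rangle$, so the generator $a$ is genuinely needed to recover the missing powers of $a$. The care required is bookkeeping: tracking which $\langle z\rangle$-coset each relevant power of $a$ lies in, and checking that auxiliary factors such as $a^{\mp 2\ell}$ or $a^{\pm 1}$ are true conjugates of $a^{\pm 1}$ rather than central elements. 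Everything else follows routinely from the order-$3$ conjugacy classes of Lemma \ref{Mnp7-1}.
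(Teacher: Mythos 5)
Your proposal is correct and follows essentially the same route as the paper: the same identity $\langle a\rangle\setminus\langle z\rangle=\mathrm{Conj}_G(a)\cup\mathrm{Conj}_G(a^{-1})$, the same case split on whether $3$ divides $\ell$ and $r$ and on whether $b^{s}=b^{j}$ or $b^{-j}$, and the same explicit two-factor decompositions $a^{\mp 2\ell}\cdot(z^{m}a^{\pm\ell}b^{\pm j})$, $a^{\mp\ell}\cdot(z^{m}a^{\pm\ell}b^{\pm j})$ and $a^{\pm 1}\cdot(z^{m'}a^{\pm\ell}b^{\pm j})$ justified by Lemma \ref{Mnp7-1}. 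No substantive differences to report.
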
 

\begin{lemma} 
\label{3oxt23}
Let $X_1 := \langle z \rangle \cdot \lbrace a, a^{-1} \rbrace$, $X_2 := \langle z \rangle \cdot \lbrace b, b^{-1} \rbrace$ and $X := X_1 \cup X_2$. Let $g\in G$ and $s,t$ be nonnegative integers such that $g$ can be written in the form $g = x_1 \cdots x_{s+t}$ such that $x_i \in X$ for all $1 \le i \le s+t$, $|\lbrace 1 \le i \le s+t \mid x_i \in X_1 \rbrace| = s$ and $|\lbrace 1 \le i \le s+t \mid x_i \in X_2 \rbrace| = t$. Then $g=z^{r} a^{s_0} b^{t_0}$ for some $0\leq r<p,$ $-s\leq s_0\leq s$ and $-t\leq t_0\leq t.$ 

\begin{proof} We proceed by induction over $k :=s+t.$ If $k=0,$ then $g=1=z^{0} a^{0} b^{0}$, and so the lemma is true for $k=0.$ Assume now that $k \ge 1$, and suppose that the following holds: If $g' \in G$ and $s^{\prime}, t^{\prime}$ are nonnegative integers with $s^{\prime} + t^{\prime} < k$ such that $g^{\prime}$ can be written in the form $g' =x_1^{\prime} \cdots x_{s^{\prime} + t^{\prime}}^{\prime}$, where $x_i^{\prime} \in X$ for all $1 \le i \le s' + t'$, $|\lbrace 1 \le i \le s^{\prime}+t^{\prime} \mid x_i^{\prime} \in X_1 \rbrace| = s^{\prime}$ and $|\lbrace 1 \le i \le s^{\prime}+t^{\prime} \mid x_i^{\prime} \in X_2 \rbrace| = t^{\prime}$, then $g' = z^{r'} a^{s_0^{\prime}} b^{t_0^{\prime}}$ for some $0 \le r' < p$, $-s' < s_0^{\prime} < s'$, $-t' < t_0^{\prime} < t'$ (induction hypothesis).\\
By the hypotheses of the lemma, we have $g = x_1 \cdots x_k$ for some $x_1, \dots, x_k \in X$, where $|\lbrace 1 \le i \le k \mid x_i \in X_1 \rbrace| = s$ and $|\lbrace 1 \le i \le k \mid x_i \in X_2 \rbrace| = t$. Using the induction hypothesis, we are going to prove that $g=z^{r} a^{s_0} b^{t_0}$ for some $0\leq r<p,$ $-s\leq s_0\leq s$ and $-t\leq t_0\leq t.$ Set $h:=x_1\cdots x_{k-1}.$ We split the proof into two cases.

	Case 1: $x_k\in X_1$ \\
	Then $|\lbrace 1 \le i \le k-1 \mid x_i \in X_1 \rbrace| = s-1$ and $|\lbrace 1 \le i \le k-1 \mid x_i \in X_2 \rbrace| = t$. Therefore, by the induction hypothesis, $h=z^{{r}^{\prime}} a^{{s}^{\prime}_0} b^{{t}^{\prime}_0}$ for some $0\leq r^{\prime}<p, -(s-1)\leq s_0^{\prime}\leq s-1, -t\leq t_0^{\prime}\leq t.$ Since $x_k\in X_1,$ we have $x_k=z^{r^{\prime\prime}} a^{\varepsilon}$ for some $0 \le r^{\prime\prime} < p$ and some $\varepsilon\in \{\pm 1\}.$  Thus $g=h\cdot x_k=z^{{r}^{\prime}} a^{{s}^{\prime}_0} b^{{t}^{\prime}_0} z^{r^{\prime\prime}} a^{\epsilon}=z^{{r}^{\prime}+{r}^{\prime\prime}}  a^{{s}^{\prime}_0} b^{{t}^{\prime}_0} a^{\epsilon}.$ From Lemma \ref{usefuleformnp1}, one can easily see that $b^{{t}^{\prime}_0} a^{\epsilon}= z^{{r}^{\prime\prime\prime}} a^{\epsilon} b^{{t}^{\prime}_0},$ where $r^{\prime\prime\prime}\in \{t^{\prime}_0, -t^{\prime}_0\}.$ Then it follows that $g=z^{r^{\prime}+r^{\prime\prime}}  a^{{s}^{\prime}_0} b^{{t}^{\prime}_0} a^{\epsilon}=z^{r^{\prime}+r^{\prime\prime} +r^{\prime\prime\prime}} a^{{s}^{\prime}_0} a^{\epsilon} b^{{t}^{\prime}_0}=z^{r^{\prime}+r^{\prime\prime} +r^{\prime\prime\prime}} a^{{s}^{\prime}_0+\epsilon} b^{{t}^{\prime}_0}.$ Now, let $0\leq r<p$ with $r\equiv r^{\prime}+r^{\prime\prime} +r^{\prime\prime\prime}\,\mbox{mod}\, p,$ and set $s_0:=s^{\prime}_0+\epsilon, t_0:=t^{\prime}_0.$ Then $g=z^{r} a^{{s}_0} b^{{t}_0},$ and we have $-s\leq -(s-1)+\epsilon\leq s_0\leq (s-1)+\epsilon\leq s,  -t\leq t_0\leq t.$ This completes the proof of the lemma for the case $x_k\in X_1.$

	Case 2: $x_k\in X_2$ \\
	Then $|\lbrace 1 \le i \le k-1 \mid x_i \in X_1 \rbrace| = s$ and $|\lbrace 1 \le i \le k-1 \mid x_i \in X_2 \rbrace| = t-1$. Therefore, by the induction hypothesis, $h=z^{{r}^{\prime}} a^{{s}^{\prime}_0} b^{{t}^{\prime}_0}$ for some $0\leq r^{\prime}<p, -s\leq s_0^{\prime}\leq s, -(t-1)\leq t_0^{\prime}\leq t-1.$ Since $x_k\in X_2,$ we have $x_k=z^{r^{\prime\prime}} b^{\varepsilon}$ for some $0\leq r^{\prime\prime}< p$ and some $\varepsilon\in \{\pm 1\}$. Then  $g=hx_k=z^{{r}^{\prime}} a^{{s}^{\prime}_0} b^{{t}^{\prime}_0} x_k=z^{{r}^{\prime}+r^{\prime\prime}} a^{{s}^{\prime}_0} b^{{t}^{\prime}_0+\epsilon}.$ Let $0\leq r<p$  
	with  $r\equiv r^{\prime}+r^{\prime\prime} \,\mbox{mod}\, p, s_0:=s^{\prime}_0$ and $t_0:=t^{\prime}_0+\epsilon.$ Then $g=z^{r} a^{{s}_0} b^{{t}_0},$ and we have $-s\leq s_0\leq s$ and  $-t\leq t_0\leq t.$ This completes the proof of the lemma for the case $x_k\in X_2.$
\end{proof} 
\end{lemma}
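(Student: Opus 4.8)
The plan is to prove the lemma by induction on $k := s+t$. The base case $k = 0$ forces $s = t = 0$ and $g = 1 = z^0 a^0 b^0$, where the required bounds $-0 \le 0 \le 0$ hold trivially. For the inductive step I would peel off the last factor: setting $h := x_1 \cdots x_{k-1}$, we have $g = h \cdot x_k$, and since each $x_i \in X$, the element $h$ is itself a product of $k-1$ factors from $X$ to which the induction hypothesis applies, with exactly one of $s, t$ lowered by one according to whether $x_k \in X_1$ or $x_k \in X_2$.

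Before the case split I would record the two structural facts that carry the whole argument. Since $z \in Z(G)$ by Lemma \ref{uniquemaximalofmnp}, every power of $z$ may be freely collected at the front of a word; and since $z$ has order $p$, the accumulated $z$-exponent can always be reduced modulo $p$ into $\{0, \dots, p-1\}$, which is precisely why no constraint on $r$ beyond $0 \le r < p$ is imposed. The only genuine commutation is moving a power of $a$ past a power of $b$, and this is supplied by Lemma \ref{usefuleformnp1}: from $b^k a b^{-k} = z^{-k} a$ one deduces $b^m a^\varepsilon = z^{r'''} a^\varepsilon b^m$ for $\varepsilon \in \{\pm 1\}$ and a suitable $r''' \in \{m, -m\}$, so such a reshuffle only perturbs the central part of the word and leaves the $a$- and $b$-exponents intact.

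For the inductive step proper I would argue in two cases. If $x_k \in X_1$, write $x_k = z^{r''} a^\varepsilon$ with $\varepsilon \in \{\pm 1\}$; applying the hypothesis to $h$, which has $s-1$ factors from $X_1$ and $t$ from $X_2$, yields $h = z^{r'} a^{s_0'} b^{t_0'}$ with $-(s-1) \le s_0' \le s-1$ and $-t \le t_0' \le t$. Then $g = z^{r'+r''} a^{s_0'} b^{t_0'} a^\varepsilon$, and commuting $a^\varepsilon$ past $b^{t_0'}$ via the formula above gives $g = z^{r} a^{s_0' + \varepsilon} b^{t_0'}$ for an appropriate $0 \le r < p$; the new $a$-exponent $s_0 := s_0' + \varepsilon$ lies in $[-s, s]$ because $|\varepsilon| = 1$, while the $b$-exponent is unchanged. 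If instead $x_k \in X_2$, write $x_k = z^{r''} b^\varepsilon$; applying the hypothesis to $h$, which now has $s$ factors from $X_1$ and $t-1$ from $X_2$, and using only centrality of $z$, one obtains $g = z^{r} a^{s_0'} b^{t_0' + \varepsilon}$ directly, with no $a$-$b$ commutation needed, and $-t \le t_0' + \varepsilon \le t$ again follows from $|\varepsilon| = 1$.

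The bookkeeping is the only content, and the single place demanding care is the commutation in the case $x_k \in X_1$: one must check that pushing $a^\varepsilon$ through $b^{t_0'}$ contributes nothing but an extra central factor, rather than altering the $a$- or $b$-exponents. This is exactly the force of Lemma \ref{usefuleformnp1} combined with $z \in Z(G)$. Once it is in place, the exponent bounds propagate by the trivial observation that adding $\pm 1$ to an integer in $[-(s-1), s-1]$ keeps it inside $[-s, s]$, and the $z$-exponent is absorbed modulo $p$, completing the induction.
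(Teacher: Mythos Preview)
Your proposal is correct and follows essentially the same approach as the paper: induction on $k=s+t$, peeling off the last factor $x_k$, and splitting into the two cases $x_k\in X_1$ and $x_k\in X_2$, using centrality of $z$ and the commutation relation from Lemma~\ref{usefuleformnp1} to push $a^{\varepsilon}$ past $b^{t_0'}$ at the cost of a central factor. The exponent bookkeeping and the reduction of the $z$-exponent modulo $p$ are handled exactly as in the paper's argument.
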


	\begin{lemma}

	\label{equlaityresu}
	Let $T := \lbrace a,b \rbrace$. Then:
	\label{exactMn2}

\begin{enumerate}[(i)]
\item  $\| G\|_{T} \geq 2^{n-3}+1$ if $p=2,$ 
\label{exactMn2:1}
\item  $\| G\|_{T} \geq \frac{p^{n-2}+p-2}{2}$ if $p$ is odd.
\label{exactMn2:2}
\end{enumerate} 

\begin{proof} 
Let $X_1 := \langle z \rangle \cdot \lbrace a, a^{-1} \rbrace$, $X_2 := \langle z \rangle \cdot \lbrace b, b^{-1} \rbrace$ and $X := X_1 \cup X_2$. From Lemma \ref{Mnp7-1}, we see that $X = \mathrm{Conj}_G(T^{\pm 1})$.\\
Assume that $p=2$, and set $g:=a^{2^{n-3}}b.$ We show that $\| g\|_{T}\geq  2^{n-3}+1$. Let $k \geq 1$ and $x_1,\dots, x_k\in X$ with $g=x_1\cdots x_k.$ Set $s := |\lbrace 1 \le i \le k \mid x_i \in X_1 \rbrace|$ and $t := |\lbrace 1 \le i \le k \mid x_i \in X_2 \rbrace|$. Then, by Lemma \ref{3oxt23}, we have $a^{2^{n-3}}b=g=z^{r} a^{s_0} b^{t_0}$ for some $r\in\{0,1\}$ and some $-s \leq s_0\leq s$, $-t \leq t_0\leq t.$ This implies that $a^{2^{n-3}}=z^{r} a^{s_0}$ and $b=b^{t_0}.$ If $s< 2^{n-3}$, then $z^{r} a^{s_0}$ has the form $a^{m}, a^{2^{n-1}-m}, a^{2^{n-2}+m}$ or $a^{2^{n-2}-m}$ for some $0\leq m < 2^{n-3},$ whence $z^{r} a^{s_0} \neq a^{2^{n-3}}.$ Thus $s\geq 2^{n-3}.$ Also, $t\geq 1$ because otherwise $b = b^{t_0}= b^0 = 1$. So we have  $k=s+t\geq 2^{n-3}+1.$ Since $x_1, \dots, x_k$ were arbitrarily chosen elements of $X = \mathrm{Conj}_G(T^{\pm 1})$ with $g = x_1 \cdots x_k$, we can now conclude that $\| g\|_{T}\geq  2^{n-3}+1$. In particular, we have $\| G\|_{T}\geq 2^{n-3}+1$, completing the proof of (\ref{exactMn2:1}).\\
Assume now that $p$ is odd, and set $g:=a^{{\frac{p^{n-2}-1}{2}}} b^{{\frac{{p-1}}{2}}}.$ We show that  $\| g\|_{T}\geq \frac{p^{n-2}+p-2}{2}.$ Let $k \geq 1$ and $x_1,\dots\,,x_k\in X$ with $g=x_1\cdots\,x_k.$ Set $s := |\lbrace 1 \le i \le k \mid x_i \in X_1 \rbrace|$ and $t := |\lbrace 1 \le i \le k \mid x_i \in X_2 \rbrace|$. Then, by Lemma \ref{3oxt23}, we have $a^{\frac{p^{n-2}-1}{2}} b^{\frac{p-1}{2}}=g=z^{r} a^{s_0} b^{t_0}$ for some $0\leq r< p$ and some $-s \leq s_0\leq s,$ $-t \leq t_0\leq t.$ This implies that $a^{\frac{p^{n-2}-1}{2}} = z^{r} a^{s_0}$ and $b^{t_0} = b^{\frac{p-1}{2}}$. If $s< \frac{p^{n-2}-1}{2},$ then one can see similarly as in the case $p=2$ that $z^{r}a^{s_0}$ cannot be $a^{\frac{p^{n-2}-1}{2}}.$ So it follows that $s\geq  \frac{p^{n-2}-1}{2}.$ Also, it follows from $b^{t_0}=b^{\frac{p-1}{2}}$ that $t\geq \frac{p-1}{2}.$ Consequently, we have $k=s+t\geq \frac{p^{n-2}+p-2}{2}$. Since $x_1, \dots, x_k$ were arbitrarily chosen elements of $X = \mathrm{Conj}_G(T^{\pm 1})$ with $g = x_1 \cdots x_k$, we can now conclude that $\| g\|_{T}\geq \frac{p^{n-2}+p-2}{2}$. In particular, we have $\| G\|_{T}\geq \frac{p^{n-2}+p-2}{2},$ completing the proof of (\ref{exactMn2:2}).
\end{proof}
\end{lemma}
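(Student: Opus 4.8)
The plan is to prove each lower bound by producing a single witness element $g \in G$ with $\|g\|_T$ at least the claimed value; since $\|G\|_T \ge \|g\|_T$, this is enough. For $p = 2$ I would take $g := a^{2^{n-3}}b$, and for $p$ odd I would take $g := a^{(p^{n-2}-1)/2}\,b^{(p-1)/2}$. These are chosen so that the $a$-exponent sits maximally far (inside $\langle a\rangle$ relative to the coset structure determined by $\langle z\rangle$) from the ``short'' cosets $\langle z\rangle a^{s_0}$ with $s_0$ small, while the $b$-exponent is nontrivial; this is what will force both many factors of ``$a$-type'' and at least one (resp.\ $(p-1)/2$) factors of ``$b$-type''.

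The engine is Lemma~\ref{3oxt23}. First I would record that $\mathrm{Conj}_G(T^{\pm 1}) = X := X_1 \cup X_2$ with $X_1 = \langle z\rangle\{a,a^{-1}\}$ and $X_2 = \langle z\rangle\{b,b^{-1}\}$: the conjugates of $a^{\pm1}$ are given by Lemma~\ref{Mnp7-1}(i),(iii), and those of $b^{\pm1}$ by Lemma~\ref{Mnp7-1}(ii),(iv) with $\ell = 0$ and $j = 1$. Now take any expression $g = x_1\cdots x_k$ with $x_i \in X$, and let $s$ and $t$ be the numbers of factors lying in $X_1$ and $X_2$ respectively, so $k = s+t$. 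Lemma~\ref{3oxt23} gives $g = z^r a^{s_0} b^{t_0}$ with $|s_0| \le s$ and $|t_0| \le t$. Reading off the $\langle a\rangle$- and $\langle b\rangle$-components, using $z = a^{p^{n-2}}$ and the uniqueness of the normal form $a^i b^j$ in $M_n(p)$, yields in the case $p=2$ the two relations $a^{2^{n-3}} = z^r a^{s_0}$ and $b = b^{t_0}$; the second forces $t_0$ odd, hence $t \ge |t_0| \ge 1$. In the odd case one similarly obtains $a^{(p^{n-2}-1)/2} = z^r a^{s_0}$ and $b^{(p-1)/2} = b^{t_0}$, and since $\mathrm{ord}(b) = p$ this means $t_0 \equiv (p-1)/2 \pmod p$, so $|t_0| \ge (p-1)/2$ and $t \ge (p-1)/2$. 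Combined with the lower bound on $s$ discussed below, $k = s+t \ge 2^{n-3}+1$ (resp.\ $\ge (p^{n-2}+p-2)/2$); as the expression for $g$ was arbitrary, $\|g\|_T$, and hence $\|G\|_T$, is at least this.

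I expect the only step needing real care to be the inequality $s \ge 2^{n-3}$ (resp.\ $s \ge (p^{n-2}-1)/2$), which is a short congruence count. If $s$ were strictly below the target $a$-exponent, then so would $|s_0|$ be, and I would check that no element $z^r a^{s_0} = a^{s_0 + r p^{n-2}}$ with $0 \le r < p$ can equal $a^{2^{n-3}}$ (resp.\ $a^{(p^{n-2}-1)/2}$): reducing modulo $p^{n-2}$ kills the $r p^{n-2}$ term, so $s_0$ would have to lie in the residue class of $2^{n-3}$ modulo $2^{n-2}$ (resp.\ of $(p^{n-2}-1)/2$ modulo $p^{n-2}$), and every integer in that class has absolute value at least $2^{n-3}$ (resp.\ at least $(p^{n-2}-1)/2$), contradicting strictness. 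Everything else is bookkeeping via Lemmas~\ref{Mnp7-1} and~\ref{3oxt23}, so once this reduction is set up the proof should be short.
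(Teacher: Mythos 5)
Your proposal is correct and follows essentially the same route as the paper: the same witness elements $a^{2^{n-3}}b$ and $a^{(p^{n-2}-1)/2}b^{(p-1)/2}$, the same identification of $\mathrm{Conj}_G(T^{\pm 1})$ with $X_1\cup X_2$ via Lemma \ref{Mnp7-1}, and the same use of Lemma \ref{3oxt23} to bound $s$ and $t$ separately. Your congruence-class phrasing of the bound $s\geq 2^{n-3}$ (resp.\ $s\geq \frac{p^{n-2}-1}{2}$) is just an equivalent rewording of the paper's case enumeration of the possible forms of $z^r a^{s_0}$.
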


With the above lemmas at hand, we can now prove Theorem \ref{Result3}.

	\begin{proof}[Proof of Theorem \ref{Result3}]
Let $p$ be a prime number and $n$ be a natural number, where $n \ge 4$ if $p = 2$ and $n \geq 3$ if $p$ is odd, and let $G := M_n(p)$. 
	Let $S\in \Gamma_2(G)$. By Lemma \ref{standardgeneratorofmnp}, there is a standard generator pair $(a,b)$ of $G$ such that $S=\{a,a^{\ell} b^{j}\}$ for some $0\leq \ell< p^{n-1}$ and some $0<j<p.$\\ 
	Assume that $p=2.$ By Lemmas \ref{namewithoutdate} (\ref{namewithoutdate:1}) and \ref{27oct2023} (\ref{27oct2023:1}), we have $\| G\|_{S}\leq 2^{n-3}+1$. Because of Lemma \ref{equlaityresu}, we even have equality when $S = \lbrace a, b \rbrace$. So it follows that $\Delta_2(G) = 2^{n-3}+1$, and Lemma \ref{delta2SDn=deltaSDn} implies that $\Delta(G)=2^{n-3}+1.$\\ 
	Assume now that $p$ is odd and $(n,p)\neq (3,3).$ By Lemmas \ref{namewithoutdate} (\ref{namewithoutdate:2}) and \ref{27oct2023} (\ref{27oct2023:2}) , we have $\| G\|_{S}\leq  \frac{p^{n-2}+p-2}{2}$, and because of Lemma \ref{equlaityresu}, we even have equality when $S = \lbrace a, b \rbrace$. Applying Lemma \ref{delta2SDn=deltaSDn}, we conclude that $\Delta(G)=\Delta_2(G)=\frac{p^{n-2}+p-2}{2}.$\\
	Assume now that $n=p=3$. From Lemmas \ref{namewithoutdate} (\ref{namewithoutdate:3}) and \ref{30635}, we see that $\| G\|_{S}\leq 2,$ and because of Lemma \ref{equlaityresu}, we even have $\| G\|_{S}= 2$ when $S = \lbrace a, b \rbrace$. Applying Lemma \ref{delta2SDn=deltaSDn}, we conclude that $\Delta(G)=\Delta_2(G)=2=\frac{3^{3-2}+3-2}{2}$.
	\end{proof}

	\section{Conclusions}\
	 After the determination of the conjugacy diameters of dihedral groups in \cite[Theorem 6.0.2]{Fawaz} and \cite[Example 2.8]{Ben}, we proved in this work further results about conjugacy diameters of non-abelian finite groups with cyclic maximal subgroups. Namely, we determined the conjugacy diameters of the semidihedral $2$-groups, the generalized quaternion groups and the modular $p$-groups. By that, the conjugacy diameters of non-abelian finite $p$-groups with cyclic maximal subgroups have been completely calculated. We believe that the strategies applied in the proofs of our results could also be used for studying conjugacy diameters of other finite groups. For example, we think that one could proceed similarly as in the proofs of Theorems \ref{MainResult1} and \ref{Result2} to study conjugacy diameters of the generalised dihedral groups.

	Our results also lead to a question concerning the relation between the conjugacy class sizes of a finite group and its conjugacy diameter. We found that the semidihedral $2$-groups and the generalized quaternion groups have ''small'' conjugacy diameters, while the elements not lying in a maximal subgroup have, in relation to the group order, ''large'' conjugacy class sizes. On the other hand, we found that the conjugacy diameters of the modular $p$-groups $M_n(p)$ grow fast as $n$ grows, while the conjugacy classes of $M_n(p)$ are ''small'' (they have at most $p$ elements). In view of this observation, it would be interesting problem to study how the conjugacy diameters of finite groups are influenced by conjugacy class sizes. Note that \cite[Proposition 7.1]{Ben} pays some attention to this question.\\
\\
	
	\textbf{Conflict of interest:} The authors declare no conflicts of interest.



\end{document}